\newcommand{\ie}{{\em i.e.}\ }
\newcommand{\ko}{\: , \;}
\numberwithin{equation}{subsection}
\newtheorem{theorem}[subsection]{Theorem}
\newtheorem{classification-theorem}[subsection]{Classification Theorem}
\newtheorem{decomposition-theorem}[subsection]{Decomposition Theorem}
\newtheorem{proposition-definition}[subsection]{Proposition-Definition}
\newtheorem{periodicity-conjecture}[subsection]{Periodicity Conjecture}
\newtheorem{lemma}[subsection]{Lemma}
\newtheorem{conjecture}[subsection]{Conjecture}
\newtheorem{example}[subsection]{Example}
\newcommand{\reminder}[1]{}
\newcommand{\opname}[1]{\operatorname{\mathsf{#1}}}
\renewcommand{\mod}{\opname{mod}\nolimits}
\newcommand{\proj}{\opname{proj}\nolimits}
\newcommand{\Proj}{\opname{Proj}\nolimits}
\newcommand{\Mod}{\opname{Mod}\nolimits}
\newcommand{\per}{\opname{per}\nolimits}
\newcommand{\colim}{\opname{colim}}
\newcommand{\Z}{\mathbb{Z}}
\newcommand{\C}{\mathbb{C}}
\newcommand{\fm}{\mathfrak{m}}
\newcommand{\iso}{\xrightarrow{_\sim}}
\newcommand{\liso}{\xleftarrow{_\sim}}
\newcommand{\id}{\mathbf{1}}
\newcommand{\Hom}{\opname{Hom}}
\newcommand{\sg}{\opname{sg}}
\newcommand{\sgh}{\widehat{\opname{Sg}}}
\newcommand{\RHom}{\opname{RHom}}
\newcommand{\rad}{\opname{rad}}
\newcommand{\ten}{\otimes}
\newcommand{\lten}{\overset{\boldmath{L}}{\ten}}
\newcommand{\ca}{{\mathscr A}}
\newcommand{\cb}{{\mathscr B}}
\newcommand{\cc}{{\mathscr C}}
\newcommand{\cd}{{\mathscr D}}
\newcommand{\ch}{{\mathscr H}}
\newcommand{\cm}{{\mathscr M}}
\newcommand{\cn}{{\mathscr N}}
\newcommand{\cp}{{\mathscr P}}
\newcommand{\cs}{{\mathscr S}}
\renewcommand{\phi}{\varphi}
\newcommand{\del}{\partial}
\begin{document}

\date{June 3, 2018} 

\title[Singular Hochschild cohomology via the singularity category]{Singular Hochschild cohomology via the\\ singularity category}
\author{Bernhard Keller}
\address{Universit\'e de Paris\\
    Sorbonne Universit\'e\\
    UFR de Math\'ematiques\\
    CNRS\\
   Institut de Math\'ematiques de Jussieu--Paris Rive Gauche, IMJ-PRG \\   
    B\^{a}timent Sophie Germain\\
    75205 Paris Cedex 13\\
    France
}
\email{bernhard.keller@imj-prg.fr}
\urladdr{https://webusers.imj-prg.fr/~bernhard.keller/}



\begin{abstract}
We show that for a noetherian algebra $A$ whose bounded dg derived 
category is smooth, the singular Hochschild cohomology 
(=Tate--Hochschild cohomology)
is isomorphic, as a graded algebra, to the Hochschild cohomology
of the dg singularity category of $A$. The existence of such
an isomorphism is suggested by recent work of Zhengfang Wang.
\end{abstract}

\keywords{Singular Hochschild cohomology, Tate--Hochschild cohomology,
singularity category, differential graded category, homological smoothness}



\maketitle

\section{Introduction}

Let $k$ be a commutative ring. We write $\ten$ for $\ten_k$. Let $A$ be a
right noetherian (non commutative) $k$-algebra projective over $k$.
The {\em stable derived category} or {\em singularity category} of $A$ is
defined as the Verdier quotient 
\[
\sg(A)=\cd^b(\mod A)/\per(A)
\]
of the bounded derived category of finitely generated (right) $A$-modules by
the {\em perfect derived category $\per(A)$}, 
\ie the full subcategory of complexes quasi-isomorphic
to bounded complexes of finitely generated projective modules. It was introduced
by Buchweitz in an unpublished manuscript \cite{Buchweitz86} in 1986
and rediscovered, in its scheme-theoretic variant, by Orlov in 2003 \cite{Orlov04}.
Notice that it vanishes when $A$ is of finite global dimension and thus measures
the degree to which $A$ is `singular', a view confirmed by the results of
\cite{Orlov04}.

Let us suppose that the enveloping algebra $A^e=A\ten A^{op}$ is also right
noetherian. In analogy with Hochschild cohomology, in view of Buchweitz'
theory, it is natural to define the {\em Tate--Hochschild cohomology}
or {\em singular Hochschild cohomology} of $A$ to be the graded algebra with
components
\[
HH^n_{sg}(A,A) = \Hom_{\sg(A^e)}(A,\Sigma^n A)\ko n\in\Z \ko
\]
where $\Sigma$ denotes the suspension (=shift) functor.
It was studied for example in \cite{EuSchedler09,BerghJorgensen13,
Nguyen13} and more recently in \cite{Wang15, Wang15b, Wang16, RiveraWang17,
Wang18, ChenLiWang19}. 
Wang showed in \cite{Wang15} that, like Hochschild cohomology
\cite{Gerstenhaber63}, singular Hochschild cohomology carries a
structure of Gerstenhaber algebra. Now recall that the Gerstenhaber algebra
structure on Hochschild cohomology is a small part of much richer higher structure
on the Hochschild cochain complex $C(A,A)$ itself, namely the structure
of a $B_\infty$-algebra in the sense of Getzler--Jones \cite[5.2]{GetzlerJones94} 
(whose definition was motivated by \cite{Baues81})
given by the Hochschild differential, the cup product
and the brace operations \cite{Kadeishvili88a, Getzler93}. In \cite{Wang18},
Wang improves on \cite{Wang15} by defining a singular Hochschild
cochain complex $C_{sg}(A,A)$ and endowing it with a $B_\infty$-structure which
in particular yields the Gerstenhaber algebra structure on $HH^*_{sg}(A,A)$.

Using \cite{Keller03} Lowen--Van den Bergh showed in 
\cite[Theorem 4.4.1]{LowenVandenBergh05} that the Hochschild cohomology of $A$
is isomorphic to the Hochschild cohomology of the canonical differential
graded (=dg) enhancement of the
(bounded or unbounded) derived category of $A$ and that the isomorphism
lifts to the $B_\infty$-level (cf. Corollary 7.6 of \cite{Toen07} for a related
statement). Together with the
complete structural analogy between Hochschild and singular Hochschild
cohomology described above, this suggests the question whether
the singular Hochschild cohomology 
of $A$ is isomorphic to the Hochschild cohomology
of the canonical dg
enhancement $\sg_{dg}(A)$ of the singularity category $\sg(A)$
(note that such an enhancement exists by the construction of $\sg(A)$
as a Verdier quotient \cite{Keller99, Drinfeld04}). 
Our main result is the following.

\begin{theorem} \label{thm:main} Suppose that the commutative ground ring $k$
is of finite global dimension.
\begin{itemize}
\item[a)] There is a canonical morphism of graded algebras
$\Phi$ from the singular Hochschild cohomology of $A$ to the Hochschild
cohomology of the dg singularity category $\sg_{dg}(A)$.
\item[b)] If the bounded dg derived category $\cd^b_{dg}(\mod A)$ is
smooth, the morphism $\Phi$ is invertible.
\end{itemize}
\end{theorem}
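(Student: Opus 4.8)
The plan is to realize both sides of the claimed isomorphism as Hochschild cohomologies of dg categories and then reduce the statement to a combination of known invariance results: the invariance of Hochschild cohomology under derived equivalence (in the sense of \cite{Keller03}) and the localization theorem expressing the singularity category as a Verdier quotient. Concretely, write $\cd^b_{dg}(\mod A)$ for the canonical dg enhancement of $\cd^b(\mod A)$ and $\per_{dg}(A)$ for the dg enhancement of $\per(A)$; then $\sg_{dg}(A)$ is by construction the dg quotient $\cd^b_{dg}(\mod A)/\per_{dg}(A)$. The first step is to produce the morphism $\Phi$ of part a): the singular Hochschild cohomology $HH^*_{sg}(A)=\Hom_{\sg(A^e)}(A,\Sigma^* A)$ receives a natural map from $\Hom$ in $\cd^b(\mod A^e)$, and one checks — this is essentially Wang's observation and the motivation in the introduction — that the $B_\infty$-structure on $C_{sg}(A,A)$ maps compatibly to the Hochschild cochain complex of $\sg_{dg}(A)$. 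More structurally, one identifies $C_{sg}(A,A)$ with a colimit $\operatorname*{colim}_n C(A,\Omega^{-n}_{A^e}A \ten_A \Omega^{-n}_{A^e}A)$-type construction and recognizes this colimit as computing morphisms in the dg quotient; the functoriality of the dg quotient construction \cite{Drinfeld04} then supplies $\Phi$ as a morphism of $B_\infty$-algebras, hence of graded algebras on cohomology.

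For part b), the strategy is: Hochschild cohomology of a dg category is invariant under Morita equivalence, and more to the point, for a dg quotient $\cu/\cn$ where $\cn$ is generated by a set of objects, there is a localization sequence relating the Hochschild cochain complexes of $\cu$, $\cn$ and $\cu/\cn$ — but only under a smoothness/properness hypothesis does this sequence behave well enough. The key input is that when $\cd^b_{dg}(\mod A)$ is smooth, the dg category $\per_{dg}(A)$ (which is always smooth, being Morita equivalent to $A$ as a dg algebra of finite global dimension over itself) sits inside it as an \emph{admissible} or at least cohomologically well-behaved subcategory, so that the Hochschild cohomology of the quotient can be computed. I would first establish that the Hochschild cohomology of $\cd^b_{dg}(\mod A)$ equals $HH^*(A)$ (this is Lowen--Van den Bergh, \cite[Theorem 4.4.1]{LowenVandenBergh05}, using \cite{Keller03}), and that the Hochschild cohomology of $\per_{dg}(A)$ also equals $HH^*(A)$ via the canonical map, which is an isomorphism precisely because $\per(A)$ is the perfect derived category of $A$. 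Then the content is to show that the ``difference'' is measured by $HH^*_{sg}(A)$ and that $\Phi$ realizes this identification.

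The cleanest route I can see is to bypass the localization sequence and instead argue directly on complexes. One has a canonical functor $\cd^b_{dg}(\mod A) \to \sg_{dg}(A)$, hence a restriction map on Hochschild cochains $C(\sg_{dg}(A)) \to C(\cd^b_{dg}(\mod A)) = C(A,A)$ up to quasi-isomorphism, and dually, because the quotient is a colocalization when $\per_{dg}(A)$ is generated by a compact object, one gets a section-like map going the other way at the level of the stabilized/singular complex. The idea is to identify $C(\sg_{dg}(A))$ with the homotopy limit (or the appropriate ``Tate'' completion) of the tower obtained by iterating the syzygy functor $\Omega_{A^e}$, which is exactly how $C_{sg}(A,A)$ is defined in \cite{Wang18}; the smoothness of $\cd^b_{dg}(\mod A)$ is what guarantees that this syzygy tower stabilizes in a controlled way — equivalently, that the natural transformation from the identity to $\Omega^{-1}_{A^e}\Omega_{A^e}$ becomes invertible in the relevant range — so that the colimit defining $C_{sg}$ agrees with the dg-quotient $\Hom$-complex. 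Passing to cohomology and checking multiplicativity of the comparison (cup product goes to composition of cochains on both sides, brace operations match by naturality of the dg quotient) finishes b).

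The main obstacle, I expect, is precisely the role of the smoothness hypothesis: one must show that $C_{sg}(A,A)$, built as a colimit over syzygies in the category of $A^e$-modules, computes the same thing as the Hochschild complex of the dg quotient $\sg_{dg}(A)$, and the subtlety is that a priori the syzygy functor acts on the singularity category of $A$ itself, not of $A^e$, so one needs $\Sigma_{A^e}$-periodicity or at least a comparison between $\Omega_{A}$ on one side and $\Omega_{A^e}$ on the other. Smoothness of $\cd^b_{dg}(\mod A)$ — equivalently, that $A$ viewed as an $A^e$-module has a resolution that is perfect \emph{over $A^e$} in the appropriate bounded sense, or that the diagonal bimodule is ``eventually projective'' — is what makes the two towers compatible and what makes the colimit in Wang's construction coincide with the quotient $\Hom$. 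Verifying this compatibility at the chain level, with all the $B_\infty$-operations, and checking that no higher homotopies obstruct the identification, is where the real work lies; everything else (constructing $\Phi$, the two Lowen--Van den Bergh identifications, multiplicativity) is either known or formal.
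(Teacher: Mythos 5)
Your proposal takes a genuinely different route from the paper, and unfortunately there are two concrete gaps that would prevent it from going through as written.

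First, the localization-sequence strategy for part b) does not work in the generality you propose. There is no exact sequence relating the Hochschild cochain complexes of $\cu$, $\cn$ and $\cu/\cn$ for a dg quotient: Hochschild cohomology is not a ``localizing'' invariant in the sense that $K$-theory or Hochschild \emph{homology} are. You say you would ``bypass'' this, but the alternative you sketch (identify $C(\sg_{dg}(A))$ with a Tate completion of the syzygy tower) is precisely the statement that needs proving, and no mechanism is given. The paper instead constructs an explicit comparison: it realizes the dg singularity category as a dg quotient $\cs = \cm/\cp$, where $\cm$ is a cofibrant model of $\cc^{-,b}_{dg}(\proj A)$ and $\cp$ the subcategory of bounded complexes, and then builds a fully faithful functor $H = F^! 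G^*: \cd(B^e) \to \cd(\cm^e)$ ($B$ the dg endomorphism algebra of $A$ in $\cp$, quasi-isomorphic to $A$) out of the restriction functors along $B\ten B^{op} \to B\ten\cm^{op} \to \cm\ten\cm^{op}$. The crucial computations are that $H$ sends the diagonal bimodule $B$ to the identity bimodule $I_\cm$, and that $F^*\to F^!$ is invertible on the relevant objects. This gives $\Phi$ by following $H$ with $p^*: \cd(\cm^e)\to\cd(\cs^e)$, without ever invoking a localization sequence.

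Second, and more seriously, you misidentify the content of the smoothness hypothesis. You write that smoothness of $\cd^b_{dg}(\mod A)$ is ``equivalently, that $A$ viewed as an $A^e$-module has a resolution that is perfect over $A^e$.'' That is false: perfectness of the diagonal $A$-bimodule $A$ over $A^e$ is smoothness \emph{of the algebra} $A$, which would force the singularity category to vanish. Smoothness of $\cd^b_{dg}(\mod A)$ is the much weaker condition that $I_\cm$ is perfect over $\cm^e$, and it holds for many genuinely singular algebras (e.g.\ finite-dimensional algebras over a perfect field). The paper uses it exactly to put $I_\cm\in\per(\cm^e)$, so that a Neeman-style compactness argument applies: one proves that $\per(\cm^e)/\per(\cp^e)\to\cd(\cm^e)/\cn$ is fully faithful, and then that $I_\cm$ is right orthogonal in this quotient to the Yoneda images of $\cp\ten\cm^{op}+\cm\ten\cp^{op}$, by factoring morphisms from a compact $Y(M,B)$ through a bounded complex. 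Your ``syzygy tower stabilizes'' intuition does not capture this; the stabilization that matters is compactness of $I_\cm$, which is a statement about $\cm^e$, not about $A^e$. Incidentally, the multiplicativity at the $B_\infty$ level that you gesture at in the last sentence is only a conjecture in the paper; only the graded-algebra statement is proved.
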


Recall that a dg category $\ca$ is {\em smooth} if the identity bimodule
\[
I_\ca: (X,Y) \mapsto \ca(X,Y)
\]
is perfect in the derived category of $\ca$-bimodules (i.e. the derived
category of $\ca\lten_k\ca^{op}$). According to Theorem A of
Elagin--Lunts--Schn\"urer's \cite{ElaginLuntsSchnuerer18}, this
holds for $\cd^b_{dg}(\mod A)$
if $A$ is a finite-dimensional algebra over any field $k$ such that
$A/\rad(A)$ is separable over $k$ (which is automatic if $k$ is perfect).
By Theorem B of [loc.~cit.], it also holds if the algebra $A$ is 
right noetherian and finitely generated over its center and 
the center is a finitely generated algebra over $k$. 

Recall that localizations of smooth dg categories are smooth
(Proposition~3.10 c) of \cite{Keller11b}). So under the assumption of
b), the dg singularity category is also smooth. It seems an interesting
question to ask whether for arbitrary $A$, singular Hochschild
cohomology is Hochschild cohomology of {\em some} dg
category.

\begin{example} \label{ex:non separable} The author is grateful to
Xiao-wu Chen for pointing out this example: 
Suppose that $k$ is a field and $A\supset k$ a finite non separable
field extension. Then the singularity category is quasi-equivalent
to the zero dg category and so its Hochschild cohomology has
to vanish. Notice that $A$ and hence $A^e$ are selfinjective
so that the singularity category $\sg(A^e)$ is equivalent to
the stable module category. Since $A$ is a finite non separable
extension of $k$, the bimodule $A$ is non projective and hence
non zero in the stable module category. Therefore, its endomorphism
algebra, which is the zeroth singular Hochschild cohomology, 
is non zero. This shows that in general, the morphism $\Phi$ cannot be
invertible. Notice that in this case, the dg category $\cd^b_{dg}(\mod A)$
is derived Morita equivalent to the field $A$ and thus is not smooth over
the ground field $k$. 
\end{example}

\begin{conjecture} The morphism $\Phi$ of the theorem lifts to a morphism
\[
C_{sg}(A,A) \iso C(\sg_{dg}(A),\sg_{dg}(A))
\] 
in the homotopy category of $B_\infty$-algebras.
\end{conjecture}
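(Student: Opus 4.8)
\smallskip
\noindent\emph{Sketch of a strategy.} The plan is to adapt the method of Lowen--Van den Bergh \cite{LowenVandenBergh05}, who lift the isomorphism $HH^*(A)\cong HH^*(\cd^b_{dg}(\mod A))$ to a quasi-isomorphism of $B_\infty$-algebras by means of Keller's comparison machinery \cite{Keller03}, and to apply it now to the localization $q\colon\ca\to\cs$, where $\ca:=\cd^b_{dg}(\mod A)$ and $\cs:=\sg_{dg}(A)$, rather than to the inclusion $\cp:=\per_{dg}(A)\hookrightarrow\ca$. One works throughout in the homotopy category of $B_\infty$-algebras, in which the Hochschild cochain complex of a dg category is well defined and functorial for dg functors and, more generally, for localization pairs in the sense of \cite{Keller03}. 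The morphism $\Phi$ of Theorem~\ref{thm:main}~a) is induced by a functor $\sg(A^e)\to\cd(\cs^e)$ sending the diagonal $A$-bimodule to the diagonal $\cs$-bimodule $I_\cs$; the aim is to produce this functor, Wang's $B_\infty$-structure on $C_{sg}(A,A)$ and the Getzler--Jones $B_\infty$-structure on $C(\sg_{dg}(A),\sg_{dg}(A))$ from a single chain-level datum.

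The key device is a common stabilization picture. First I would fix a projective bimodule resolution of $A$ over $A^e$ with syzygies $\Omega^j:=\Omega^j_{A^e}A$; the canonical maps $\cdots\to\Sigma^j\Omega^{j}\to\Sigma^{j-1}\Omega^{j-1}\to\cdots\to A$ exhibit $A$ as a homotopy colimit in $\sg(A^e)$, and --- since the syzygy functor becomes invertible on $\cs$, isomorphic to $\Sigma^{-1}$ --- the base-change functor carries this tower to a homotopy-constant tower on $I_\cs$ in $\cd(\cs^e)$. Thus $C(\sg_{dg}(A),\sg_{dg}(A))$ and, conjecturally, Wang's $C_{sg}(A,A)$ are both to be realized as homotopy colimits of Hochschild-type $B_\infty$-data attached to the stages $\Omega^j$, compatibly under $\Phi$, so that the desired $B_\infty$-morphism is the homotopy colimit of the stagewise ones. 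Making this precise requires a $B_\infty$-enhancement of the localization argument behind Theorem~\ref{thm:main}~b): one must show that the brace operations behave well under the homotopy colimit defining the Verdier quotient $\cs=\ca/\cp$ (equivalently, under its bar/telescope model), tracking in particular the brace structure on the ``perfectly supported'' part of the Hochschild complex, i.e.\ on the homotopy fibre of $C(A,A)\to C_{sg}(A,A)$ and of the comparison map for $\ca$ and $\cs$.

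The main obstacle will be to identify the intrinsic $B_\infty$-algebra produced this way with Wang's combinatorially defined $B_\infty$-algebra $C_{sg}(A,A)$ of \cite{Wang18}. On cohomology the two Gerstenhaber structures agree by \cite{Wang15}, and Wang's singular brace formulas are visibly obtained by stabilizing the classical ones, which makes the statement very plausible; but a clean proof will presumably need a third, manifestly functorial model of the singular Hochschild $B_\infty$-algebra --- for instance the relative Hochschild $B_\infty$-algebra of $\per_{dg}(A^e)$ with respect to the distinguished object $A$, or a telescope model built from the syzygies --- together with explicit $B_\infty$-quasi-isomorphisms to it from both Wang's complex and the categorical complex. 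Granting such a comparison, the smoothness hypothesis of Theorem~\ref{thm:main}~b) upgrades the final $B_\infty$-morphism to a quasi-isomorphism, so the Conjecture follows in that case; in general one obtains only a $B_\infty$-morphism, as conjectured.

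It should be useful to run the argument first in the Iwanaga--Gorenstein case, where $\sg(A^e)$ is the stable category of maximal Cohen--Macaulay $A^e$-modules and a complete resolution $T^\bullet$ of the diagonal makes the stabilization honest: then $C_{sg}(A,A)\simeq\Hom^\bullet_{A^e}(T^\bullet,A)$ carries an explicit $B_\infty$-structure extending the classical one on its non-negative part, which one can compare directly with the Hochschild $B_\infty$-algebra of the dg stable category; the general case should then follow by an approximation argument in the spirit of Buchweitz's theory.
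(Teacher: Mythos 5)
The statement you were asked to prove is stated in the paper as an open \emph{conjecture}, and the paper offers no proof of it; what it offers is the remark in Section~\ref{s:B-infinity}, which records the natural strategy and explains where it gets stuck, and it attributes a special case (radical square zero algebras) to Chen--Li--Wang \cite{ChenLiWang19}. Your submission is, accordingly, a strategy sketch and not a proof, and it should be read as such; it does not settle the conjecture.

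That said, it is worth comparing the two sketches, because they overlap closely. Both of you organize the comparison along the stabilization by bimodule syzygies: you write $\Omega^j=\Omega^j_{A^e}A$, the paper works with the stupid truncations $\sigma_{\leq -q}P$ of a projective bimodule resolution and observes that for the bar resolution these are $\Sigma^q\Omega^q A$. Both of you invoke Wang's $B_\infty$-structure on $C_{sg}(A,A)$ \cite{Wang18} at one end, the Getzler--Jones structure on $C(\sg_{dg}(A),\sg_{dg}(A))$ at the other, and Keller's comparison machinery \cite{Keller03} in the spirit of Lowen--Van den Bergh \cite{LowenVandenBergh05}. Where the two diagnoses diverge is in locating the obstruction. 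The paper points at a concrete technical issue: the comparison functor $H=F^!G^*$ of Section~\ref{s:main}, being a composite of a \emph{right} adjoint with a left adjoint to restriction functors, does not preserve cofibrancy, so the intermediate terms $\RHom_{\cm^e}(I_\cm,H\sigma_{\leq q}P)$ of the telescope cannot readily be made explicit at the cochain level in a way that carries the brace operations. You instead locate the main obstacle in identifying the categorically produced $B_\infty$-algebra with Wang's combinatorially defined one, and you propose interposing a third, manifestly functorial model. That concern is legitimate, but it is downstream: one first has to produce a $B_\infty$-level telescope at all, and your sketch does not engage with the non-cofibrancy problem that is the paper's stated sticking point. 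A workable attempt would have to replace $H$ by a comparison that does preserve cofibrancy, or carry out the telescope in a model where the failure of cofibrancy is harmless.

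One further caution. You assert that ``on cohomology the two Gerstenhaber structures agree by \cite{Wang15}.'' That reference establishes that $HH^*_{sg}(A,A)$ carries a Gerstenhaber structure; it does not compare it with the Gerstenhaber structure on $HH^*(\sg_{dg}(A),\sg_{dg}(A))$. Theorem~\ref{thm:main} only produces an isomorphism of graded \emph{algebras}, so compatibility with the Lie bracket is part of what the conjecture asserts, not something you may assume at the outset; as written, this step of your argument is circular.
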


Notice that the $B_\infty$-structure on Hochschild cohomology of dg categories
is preserved (up to quasi-isomorphism) under Morita equivalences, 
cf. \cite{Keller03}. 

\begin{theorem}[Chen--Li--Wang \cite{ChenLiWang19}] The conjecture
holds when $k$ is a field and $A=kQ/(kQ_1)^2$ is the radical square zero algebra
associated with a finite quiver $Q$ without sources or sinks with
arrow set $Q_1$.
\end{theorem}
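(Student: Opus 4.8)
The plan is to make both $B_\infty$-algebras completely explicit and then compare them directly, using Xiao-Wu Chen's description of the singularity category of a radical square zero algebra. On one side: since $Q$ has no sources or sinks, there is a triangle equivalence identifying $\sg(A)$ with the perfect derived category $\per(L)$ of an explicit small (differential) graded algebra $L$ built from $Q$ — concretely one may take $L$ to be a Leavitt path algebra attached to $Q$, with zero differential and a suitable internal grading arranged so that the suspension of $\sg(A)$ corresponds to the grading shift. I would first check that this equivalence is induced by a quasi-equivalence of the canonical dg enhancements, $\sg_{dg}(A) \simeq \per_{dg}(L)$ — plausible because Chen's construction is very explicit — and then invoke the Morita invariance of the $B_\infty$-structure on Hochschild cochains of dg categories (\cite{Keller03}) to identify $C(\sg_{dg}(A),\sg_{dg}(A))$, as an object of $\Ho(B_\infty)$, with the Hochschild cochain complex $C(L,L)$ of the algebra $L$.

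On the other side, radical square zero algebras admit an explicit, essentially $2$-periodic minimal projective resolution of $A$ over $A^e$, built from the arrows of $Q$; feeding this into Wang's construction of $C_{sg}(A,A)$ in \cite{Wang18} yields a small combinatorial $B_\infty$-model whose underlying complex, cup product, and brace operations are all expressed through paths in $Q$ — this is essentially the computation of \cite{ChenLiWang19}. The core of the proof is then to write down an explicit morphism of $B_\infty$-algebras $C_{sg}(A,A) \to C(L,L)$ (or a short zig-zag of $B_\infty$-quasi-isomorphisms) realizing the combinatorial dictionary that sends the ``formal inverse arrows'' occurring in the singular Hochschild complex to the ghost generators $\alpha^{*}$ of $L$, and to verify that it intertwines the Hochschild differentials, the cup products, and — crucially — all of the brace operations on the two sides.

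Finally, I would check that the $B_\infty$-morphism so constructed induces the map $\Phi$ on cohomology. Because $k$ is a field and $A/\rad(A)\cong k^{Q_0}$ is separable over $k$, Theorem~A of \cite{ElaginLuntsSchnuerer18} shows that $\cd^b_{dg}(\mod A)$ is smooth, so by part b) of Theorem~\ref{thm:main} the map $\Phi$ is an isomorphism of graded algebras; a $B_\infty$-morphism inducing $\Phi$ is therefore a quasi-isomorphism, hence an isomorphism in $\Ho(B_\infty)$, which is exactly the Conjecture in this case. I expect the middle step to be the main obstacle: exhibiting the comparison at the full $B_\infty$-level — compatibly with every brace, not merely with the differential and the cup product — and in a form that is visibly $\Phi$ on cohomology. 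Here one should exploit the special combinatorics of radical square zero algebras, the concrete shape of $C_{sg}(A,A)$ from \cite{Wang18}, and the fact that $\sg_{dg}(A)$ — hence $L$ — is smooth in this situation (Proposition~3.10 c) of \cite{Keller11b}, applied to the localization $\cd^b_{dg}(\mod A) \to \sg_{dg}(A)$), which is what makes a naive chain-level comparison have a chance of being promoted to a $B_\infty$-equivalence.
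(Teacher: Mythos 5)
The paper does not contain a proof of this theorem: it is stated and attributed to Chen--Li--Wang \cite{ChenLiWang19}, a reference listed as ``in preparation'' at the time of writing, so there is no in-paper argument to compare against. That said, the title of the cited work (``The Hochschild cohomology of Leavitt path algebras and Tate--Hochschild cohomology'') strongly indicates that the intended route does go through Leavitt path algebras, so your overall plan is aligned with the expected strategy. The scaffolding you set up is sound: Xiao-Wu Chen's singular equivalence for radical-square-zero algebras, Morita invariance of the $B_\infty$-structure from \cite{Keller03}, Wang's explicit combinatorial model of $C_{sg}(A,A)$ coming from the minimal bimodule resolution, and the concluding reduction that any $B_\infty$-morphism inducing $\Phi$ on cohomology is automatically a $B_\infty$-quasi-isomorphism once $\Phi$ is known to be an isomorphism --- which you correctly deduce from Theorem~A of \cite{ElaginLuntsSchnuerer18} (since $A/\rad(A)\cong k^{Q_0}$ is separable over any field $k$) together with Theorem~\ref{thm:main}~b).

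The genuine gap is the one you yourself flag: the central step --- exhibiting the $B_\infty$-morphism (or zig-zag) from $C_{sg}(A,A)$ to $C(L,L)$ and checking compatibility with \emph{all} brace operations, not just the differential and cup product --- is announced but not carried out, and this is precisely where the mathematical content of the Chen--Li--Wang result must lie. Two further points you treat too lightly: (i) that Chen's triangle equivalence $\sg(A)\simeq\per(L)$ (or its graded-module variant) lifts to a quasi-equivalence of dg enhancements, and in particular identifies $\sg_{dg}(A)$ with a model whose Hochschild cochain complex is Morita-equivalent to $C(L,L)$, is a nontrivial dg-lifting assertion that needs justification rather than being called ``plausible''; and (ii) whether $L$ is to be taken as an ungraded algebra, a $\Z$-graded algebra, or a dg algebra matters, since the Hochschild complexes in these three settings differ, and the comparison with the canonical enhancement $\sg_{dg}(A)$ fixes one specific choice. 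As a roadmap, your proposal is convincing and likely matches the cited work; as a proof, the key construction remains to be supplied.
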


Let us mention an application of Theorem \ref{thm:main} 
obtained in joint work with Zheng Hua. Suppose that $k$ is algebraically closed 
of characteristic $0$ and let $P$ the power series algebra $k[[x_1,\ldots, x_n]]$.

\begin{theorem}[\cite{HuaKeller18}] \label{thm:hypersurface} 
Suppose that $Q\in P$ has an isolated singularity at the origin and $A=P/(Q)$.
Then $A$ is determined up to
isomorphism by its dimension and the dg singularity category $\sg_{dg}(A)$.
\end{theorem}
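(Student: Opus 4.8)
The plan is to extract from $\sg_{dg}(A)$, together with $\dim A$, the Tyurina algebra of the singularity, and then to conclude by the Mather--Yau theorem. The first point is that $A$, being a complete local ring, is not of finite type over $k$, so the smoothness criteria recalled after Theorem~\ref{thm:main} do not apply to it directly; to get around this I would, after replacing $A$ by an isomorphic algebra (using finite determinacy together with a general-position perturbation), assume that $Q\in k[x_1,\dots,x_n]$ is a polynomial whose singular locus in $\mathbb{A}^n_k$ is the origin alone, and set $A_0=k[x_1,\dots,x_n]/(Q)$. Then $\widehat{A_0}\cong A$ and, since $A_0$ has an isolated singularity, $\sg_{dg}(A_0)$ is quasi-equivalent to $\sg_{dg}(A)$. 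Now $A_0$ is a finitely generated commutative $k$-algebra, so Theorem~B of \cite{ElaginLuntsSchnuerer18} shows that $\cd^b_{dg}(\mod A_0)$ is smooth, hence so is $\sg_{dg}(A_0)$ (Proposition~3.10~c) of \cite{Keller11b}), and Theorem~\ref{thm:main}~b) yields an isomorphism of graded algebras $HH^*(\sg_{dg}(A_0))\iso HH^*_{sg}(A_0,A_0)$. Taking degree-zero parts and using $\sg_{dg}(A)\simeq\sg_{dg}(A_0)$, this gives $HH^0(\sg_{dg}(A))\cong HH^0_{sg}(A_0,A_0)=\End_{\sg(A_0^e)}(A_0)$.

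The heart of the argument is then the identification of this ring with the Tyurina algebra. Writing $P^e$ for the appropriate localization of $k[x_1,\dots,x_n,y_1,\dots,y_n]$, one has $A_0^e=P^e/(Q(x),Q(y))$, a codimension-two complete intersection, with the diagonal bimodule presented as $A_0=P^e/(x_1-y_1,\dots,x_n-y_n,\,Q(x))$. The endomorphisms of the diagonal bimodule are the multiplications by elements of $Z(A_0)=A_0$, and I would compute which of these factor through a free $A_0^e$-module, and so vanish in $\sg(A_0^e)$. Using Hadamard's expansion $Q(x)-Q(y)=\sum_i(x_i-y_i)h_i$ with $h_i|_{x=y}=\del_i Q$, one writes down an explicit free resolution of the diagonal over $A_0^e$; because the singularity is isolated, the relevant syzygies are finitely generated and the computation is finite, and it should show that multiplication by $a$ becomes trivial precisely when $a\in(\del_1 Q,\dots,\del_n Q)A_0$. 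This would give
\[
HH^0(\sg_{dg}(A))\;\cong\;A_0/(\del_1 Q,\dots,\del_n Q)A_0\;=\;k[[x_1,\dots,x_n]]/(Q,\del_1 Q,\dots,\del_n Q)\;=\;T_A,
\]
the Tyurina algebra of $A$ --- a finite-dimensional local $k$-algebra by the isolated-singularity hypothesis. (The same method should in fact compute all of $HH^*_{sg}(A,A)$, but only the degree-zero part is needed here.)

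To finish, I would recover the number of variables. If $\sg_{dg}(A)=0$ then $A$ is regular, hence $A\cong k[[x_1,\dots,x_{\dim A}]]$ and the statement is clear. Otherwise $A$ is singular, $Q\in\fm^2$, the embedding dimension of $A$ equals $\dim A+1$, and, $A$ being a hypersurface, $A$ is the quotient of the regular local ring $k[[x_1,\dots,x_{\dim A+1}]]$ by a single power series lying in $\fm^2$. Thus $\dim A$ together with $\sg_{dg}(A)$ determines the number of variables and, by the previous step, the isomorphism class of $T_A$. By the Mather--Yau theorem --- in its formal version, valid over the algebraically closed field $k$ of characteristic zero --- two isolated hypersurface singularities in the same number of variables with isomorphic Tyurina algebras are isomorphic as $k$-algebras (their defining power series being contact equivalent). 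Hence $A$ is determined up to isomorphism by $\dim A$ and $\sg_{dg}(A)$.

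The step I expect to be the real obstacle is the computation in the middle paragraph: proving that $HH^0_{sg}(A,A)$ is the Tyurina algebra rather than the coarser Milnor (Jacobian) algebra --- a distinction that genuinely matters, since the Milnor algebra alone does not determine the singularity --- which amounts to a precise control of the minimal free resolution of the diagonal bimodule over the codimension-two complete intersection $A_0^e$. A secondary difficulty is the reduction in the first paragraph, forced by the fact that Theorem~\ref{thm:main}~b) rests on smoothness hypotheses that the complete local ring $A$ does not visibly satisfy; and the hypotheses that $k$ be algebraically closed of characteristic zero are used both there (compare Example~\ref{ex:non separable}) and, essentially, in the Mather--Yau theorem.
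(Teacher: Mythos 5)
Your overall strategy is the right one and matches the paper's endgame: show that $HH^0$ of $\sg_{dg}(A)$ recovers the Tyurina algebra $T_A$, read off the number of variables from $\dim A$, and conclude by the formal Mather--Yau theorem. Your use of Theorem~\ref{thm:main}~b), of the smoothness criterion from \cite{ElaginLuntsSchnuerer18}, and the closing Mather--Yau step all line up with the paper.

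The genuine gap is the middle paragraph, which you yourself flag as ``the real obstacle''. You propose to compute $HH^0_{sg}(A,A)=\End_{\sg(A^e)}(A)$ directly, by writing a free resolution of the diagonal bimodule over the codimension-two complete intersection $A_0^e$ via Hadamard's difference quotients and deciding which multiplications by $a\in A_0$ factor through a projective. This is not carried out, and it is harder than it looks: the diagonal is not perfect over $A_0^e$, so the resolution is infinite, and the whole point is to control its \emph{stable} behaviour. Moreover the distinction you correctly insist on --- Tyurina rather than Milnor --- is exactly the place where the $\Z$-graded versus $\Z/2$-graded bookkeeping matters (Dyckerhoff's $\Z/2$-graded computation gives the Milnor algebra), so a purely degree-zero argument has to be set up with some care not to collapse the two. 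The paper avoids the direct computation entirely: it uses the Guccione--Guccione--Redondo--Villamayor Koszul-complex description of the Hochschild cohomology of a hypersurface \emph{in high degrees}, shows via the regular sequence $\del_1 Q,\dots,\del_n Q$ that this stabilizes to $T$, invokes Buchweitz's Theorem~6.3.4 (valid since $R\ten R$ is noetherian Gorenstein) to identify Tate--Hochschild with Hochschild cohomology in high degrees, and then transports the answer down to degree $0$ using the $2$-periodicity of $\sg_{dg}(R)$ as the $\Z$-graded shadow of the matrix-factorization category. Those three ingredients --- high-degree Hochschild cohomology of hypersurfaces, the Gorenstein stabilization theorem, and $2$-periodicity --- are exactly what replaces your unfinished resolution argument, and none of them appear in your sketch.

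Two smaller remarks. First, the paper does not perturb $Q$ to isolate the singularity at the origin; after Weierstrass preparation it keeps $S=k[x]/(Q)$ with possibly several isolated singular points and simply localizes at $\fm$, using that dg localizations of smooth dg categories are smooth. Your perturbation route (finite determinacy plus general position) is plausible but adds an argument that the paper sidesteps. Second, the passage from $\sg(R)$ (or your $\sg(A_0)$) to $\sg(A)$ is via idempotent completion, not a quasi-equivalence of dg categories; the paper cites Dyckerhoff's Theorem~5.7 for this and notes that it still induces an isomorphism on Hochschild cohomology since it is a derived Morita equivalence. You should state this explicitly rather than asserting a quasi-equivalence.
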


In \cite[Theorem 8.1]{Efimov12}, Efimov proves a related but different
reconstruction theorem: He shows that if $Q$ is a polynomial, it is
determined, up to a formal change of variables, by the differential
$\Z/2$-graded endomorphism algebra $E$ of the residue field 
in the differential $\Z/2$-graded singularity category together
with a fixed isomorphism between $H^*B$ and the exterior
algebra $\Lambda(k^n)$.

In section \ref{s:main}, we generalize Theorem \ref{thm:main} to the
non noetherian setting and prove the generalized statement. We comment on
a possible lift of this proof to the $B_\infty$-level in section \ref{s:B-infinity}.
We prove Theorem \ref{thm:hypersurface}
in section \ref{s:hypersurface}.

\section{Generalization and proof}
\label{s:main}

\subsection{Generalization to the non noetherian case}
We assume that $A$ is an arbitrary $k$-algebra projective as a $k$-module. 
Its {\em pseudocoherent derived category} is the homotopy category of
right bounded complexes of finitely generated projective modules
with bounded homology:
\[
\cd^{pc}(A)=\ch^{-,b}(\proj A).
\]
Notice that when $A$ is right noetherian, this category is canonically equivalent to
$\cd^b(\mod A)$. It has a canoncial dg enrichment
\[
\cd^{pc}_{dg}(A) =\cc^{-,b}_{dg}(\proj A).
\]
The singularity category $\sg(A)$ is defined as the Verdier quotient 
$\cd^{pc}(A)/\ch^b(\proj A)$. When $A$ is right noetherian,
this is equivalent to the definition given in the introduction.

The (partially) {\em completed singularity category $\sgh(A)$} is defined as the Verdier
quotient of the  bounded derived category $\cd^b(\Mod A)$ of all right $A$-modules
by its full subcategory consisting of all complexes quasi-isomorphic to bounded complexes of arbitrary projective modules.

\begin{lemma} \label{lemma:fully-faithful}
The canonical functor $\sg(A) \to \sgh(A)$ is fully faithful.
\end{lemma}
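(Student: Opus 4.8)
The plan is to realise both $\sg(A)$ and $\sgh(A)$ as Verdier quotients of $\ct:=\cd^b(\Mod A)$ and reduce to a factorisation property. Inside $\ct$, let $\cu$ be the essential image of the fully faithful canonical functor $\cd^{pc}(A)=\ch^{-,b}(\proj A)\to\ct$, i.e.\ the full subcategory of complexes quasi-isomorphic to a right bounded complex of finitely generated projectives with bounded homology; and let $\cs$ be the thick subcategory of complexes of finite projective dimension, so that $\sgh(A)=\ct/\cs$ by definition. A first, standard, point is that $\cu\cap\cs$ coincides with $\ch^b(\proj A)$: an object of $\cu\cap\cs$ is pseudocoherent and of finite Tor-dimension, hence perfect, because for a right bounded complex $U$ of finitely generated projectives with bounded homology the cocycle $Z^{-N}(U)=\ker(U^{-N}\to U^{-N+1})$ is finitely presented of finite flat dimension, hence finitely generated projective, once $N\gg0$, and replacing the infinite tail of $U$ by $Z^{-N}(U)$ then exhibits a bounded complex of finitely generated projectives quasi-isomorphic to $U$. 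Granting this, $\sg(A)=\cd^{pc}(A)/\ch^b(\proj A)=\cu/(\cu\cap\cs)$, and the functor of the lemma becomes the canonical $\cu/(\cu\cap\cs)\to\ct/\cs$.

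Next I would invoke an elementary criterion for Verdier quotients, of the type of \cite[Lemma 1.6]{Orlov04}: if $\cv\subseteq\ct$ is a full triangulated subcategory and $\cs\subseteq\ct$ a thick subcategory such that every morphism of $\ct$ from an object of $\cs$ to an object of $\cv$ factors through an object of $\cv\cap\cs$, then $\cv/(\cv\cap\cs)\to\ct/\cs$ is fully faithful. This is proved with the calculus of fractions. For fullness, represent a morphism of $\ct/\cs$ by a roof $U\xrightarrow{a}W\xleftarrow{t}U'$ with $U,U'\in\cv$ and $t$ having cone $S\in\cs$; rotating the triangle on $t$ gives the connecting morphism $\Sigma^{-1}S\to U'$, which by hypothesis factors through some $E\in\cv\cap\cs$, and an octahedron on that factorisation produces $W'\in\cv$ (the cone of $E\to U'$) and a morphism $W\to W'$ with cone in $\cs$, hence invertible in $\ct/\cs$; postcomposing the roof with $W\to W'$ yields a roof with both legs in $\cv$ and denominator having cone in $\cv\cap\cs$, mapping to the given morphism. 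Faithfulness is analogous: a roof of $\cv/(\cv\cap\cs)$ that dies in $\ct/\cs$ has its numerator factoring through an object of $\cs$, and applying the hypothesis to the leg of the associated triangle that points into $\cv$ absorbs this factorisation into $\cv\cap\cs$, so the roof was already zero.

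Everything thus reduces to establishing the factorisation property for $\cv=\cu$, and this I expect to be the main obstacle: a morphism $g\colon S\to U$ of $\ct$ with $S\in\cs$ and $U\in\cu$ must be shown to factor through an object of $\cu\cap\cs$. One cannot achieve this by shrinking $S$, since an object of $\cs$ is only quasi-isomorphic to a bounded complex of \emph{arbitrary} projectives and so need not admit a model with finitely generated terms; instead I would truncate $U$. Represent $U$ by a right bounded complex of finitely generated projectives with bounded homology. For $N\gg0$ the brutal truncation $\sigma^{\geq-N}U$ is a subcomplex of $U$ which is a bounded complex of finitely generated projectives — hence lies in $\cu\cap\cs$ — and the cone $C_N$ of the inclusion $\iota_N\colon\sigma^{\geq-N}U\to U$ is quasi-isomorphic to the module $Z^{-N}(U)$ placed in cohomological degree $-N-1$. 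Since $S$ is perfect, its dual $S^{\vee}=\RHom_A(S,A)$ is again perfect, so $\RHom(S,M)\cong M\lten_A S^{\vee}$ has homology concentrated in a range of degrees depending only on $S$, for every $A$-module $M$; taking $M=Z^{-N}(U)$ and $N$ large one gets $\Hom_\ct(S,C_N)=0$, whence $g$ lifts along $\iota_N$ and factors through $\sigma^{\geq-N}U\in\cu\cap\cs$. The only other nontrivial ingredient is the standard fact, invoked in the first step, that a pseudocoherent complex of finite Tor-dimension is perfect.
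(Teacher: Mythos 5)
Your proof takes a genuinely different route from the paper's. You realise the functor as $\cu/(\cu\cap\cs)\to\ct/\cs$ and verify the Verdier criterion by showing that morphisms from $\cs$ to $\cu$ factor through $\cu\cap\cs$, using brutal truncations of the \emph{target} $U$. The paper verifies the dual half of the criterion and does so much more directly: a morphism $M\to P$ with $M$ a right bounded complex of finitely generated projectives and $P$ a bounded complex of arbitrary projectives is a genuine chain map (both complexes are $K$-projective), and since the components of $M$ are finitely generated and $P$ is bounded one constructs, working degree by degree from the bottom of $P$ upward, a bounded subcomplex $P'\subseteq P$ with finitely generated projective components through which the chain map factors. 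The paper's three-line argument also renders your preliminary identification $\cu\cap\cs\simeq\ch^b(\proj A)$ (``pseudocoherent plus finite Tor-dimension implies perfect'') unnecessary as a separate step, since it drops out of the same factorisation applied to an identity map; in your version this identification must be established first, as you do.

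There is one substantive slip, though it is easily repaired. You write ``Since $S$ is perfect, its dual $S^{\vee}=\RHom_A(S,A)$ is again perfect, so $\RHom(S,M)\cong M\lten_A S^{\vee}$ has homology concentrated in a range of degrees depending only on $S$.'' But $S\in\cs$ is only quasi-isomorphic to a bounded complex of \emph{arbitrary} projective modules — not finitely generated ones — so $S$ need not be perfect, $S^{\vee}$ need not be perfect, and the displayed isomorphism is unavailable. Fortunately the conclusion you need, $\Hom_\ct(S,C_N)=0$ for $N\gg0$, holds for a more elementary reason: replace $S$ by a bounded complex of projectives concentrated in degrees $[a,b]$; since such $S$ is $K$-projective, $\Hom_\ct(S,C_N)$ is computed in the homotopy category by maps to $\sigma^{<-N}U$, and once $N>-a$ there is not even a nonzero \emph{chain} map because $\sigma^{<-N}U$ lives in degrees strictly below $a$. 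A second, smaller, imprecision occurs in your opening step: you say $Z^{-N}(U)$ is ``finitely presented of finite flat dimension, hence finitely generated projective,'' but what the truncation argument actually gives you is that for $N$ large enough $Z^{-N}(U)$ is finitely presented and \emph{flat} (the finite Tor-amplitude of $U$ forces $\Tor_j(M,Z^{-N}(U))=0$ for all $j>0$ once $-N$ lies below the amplitude), and a finitely presented flat module is projective. With these two repairs your argument is correct; it is simply heavier machinery than the paper's.
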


\begin{proof} Let $M$ be a right bounded complex of finitely generated
projective modules with bounded homology and $P$ a bounded
complex of arbitrary projective modules. Since the components of
$M$ are finitely generated, each morphism $M \to P$ in the derived
category factors through a bounded complex $P'$ with finitely generated
projective components. This yields the claim.
\end{proof}

Since we do not assume that $A^e$ is noetherian, the $A$-bimodule $A$ will
not, in general, belong to the singularity category $\sg(A^e)$. 
But it always belongs to the completed singularity category $\sgh(A^e)$.
We define the singular Hochschild cohomology of $A$ to be the graded
algebra with components
\[
HH^n_{sg}(A,A) = \Hom_{\sgh(A^e)}(A, \Sigma^n A)\ko n\in\Z.
\]

\begin{theorem}  \label{thm:non-noetherian} Even if $A^e$ is non noetherian,
there is a canonical morphism of graded algebras
from the singular Hochschild cohomology of $A$ to the Hochschild
cohomology of the dg singularity category $\sg_{dg}(A)$. It is an
isomorphism if the pseudocoherent dg derived category $\cd^{pc}_{dg}(A)$
is smooth.
\end{theorem}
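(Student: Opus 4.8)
The plan is to realize the singularity category, its completion, and the singular Hochschild cochain complex all inside the derived category of $A^e$-bimodules, so that the comparison morphism $\Phi$ becomes a morphism of derived $\Hom$-complexes that one can analyze by standard dg techniques. First I would recall the key input from Buchweitz' theory: the singularity category $\sg(A)$ is the stable category of (maximal) Cohen--Macaulay objects, and more importantly, for the bimodule $A$ over $A^e$, one has a model of the "singular" invariants in terms of totalizations of unbounded acyclic complexes of projectives — concretely, the singular Hochschild cochain complex $C_{sg}(A,A)$ computes $\Hom_{\sgh(A^e)}(A,\Sigma^* A)$. So I would set $\Lambda = A^e$ and work with the dg category $\cd^{pc}_{dg}(A)$, whose Hochschild cohomology is by definition $\RHom$ of the identity bimodule $I$ with itself over $\cd^{pc}_{dg}(A)^e$, and similarly for its Verdier localization $\sg_{dg}(A)$ whose identity bimodule I will call $\bar I$.

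Next I would construct $\Phi$ at the chain level. The localization functor $\cd^{pc}_{dg}(A)\to\sg_{dg}(A)$ induces a morphism of Hochschild complexes $C(\cd^{pc}_{dg}(A))\to C(\sg_{dg}(A))$. On the other side, Lowen--Van den Bergh (building on \cite{Keller03}) give a quasi-isomorphism $C(A,A)\iso C(\cd^{pc}_{dg}(A))$ since $\cd^{pc}_{dg}(A)$ is a dg enhancement of a category generated by $A$. The point is that the composite $C(A,A)\to C(\sg_{dg}(A))$ does \emph{not} factor through the ordinary Hochschild cohomology $HH^*(A,A)$ but rather extends to the singular Hochschild complex: the extra cochains in $C_{sg}(A,A)$ (which involve the acyclic completion) map to genuine Hochschild cochains of the dg category $\sg_{dg}(A)$ because in $\sg_{dg}(A)$ the object $A$ has become its own shift-periodic "tail", i.e. the Yoneda-type extensions that are killed in $\per(A)$ survive as honest morphisms after localization. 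So I would define $\Phi: C_{sg}(A,A)\to C(\sg_{dg}(A),\sg_{dg}(A))$ by sending a singular cochain to the corresponding multilinear operation on the localized morphism complexes, check it is a morphism of complexes (and of graded algebras on cohomology via the cup product), and pass to cohomology to get part (a) / the first sentence of Theorem~\ref{thm:non-noetherian}. Here I should also use Lemma~\ref{lemma:fully-faithful} to identify $\Hom_{\sgh(A^e)}(A,\Sigma^* A)$ with the colimit defining the Tate extensions.

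For the isomorphism statement, the strategy is to reduce everything to a statement about the identity bimodule. Hochschild cohomology of a dg category $\ca$ is $\RHom_{\ca^e}(I_\ca, I_\ca)$; for $\ca = \cd^{pc}_{dg}(A)$ smoothness means $I_\ca$ is perfect over $\ca^e$, and since $\ca$ is generated by the single object $A$ we have a Morita equivalence $\ca^e \simeq (A^e)$-dg-modules, under which $I_\ca$ corresponds to the diagonal bimodule $A$. Localizing at $\per(A)$ on both factors, $\sg_{dg}(A)^e$ is the quotient of $\ca^e$, and the identity bimodule $\bar I$ of $\sg_{dg}(A)$ corresponds to the image of $A$ in the localized bimodule category. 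The crucial computation is: $\RHom_{\sg_{dg}(A)^e}(\bar I,\bar I)$ is computed by first taking $\RHom$ over $\ca^e$ of $A$ with the \emph{local cohomology / completion} of $A$ along the perfect bimodules, and this completion is precisely what the singular Hochschild complex computes — provided we may commute the two localizations (horizontal and vertical) with the $\RHom$. This commutation is exactly where smoothness of $\cd^{pc}_{dg}(A)$ is used: because $I_\ca$ is perfect, $\RHom_{\ca^e}(I_\ca,-)$ commutes with arbitrary coproducts and with the relevant Bousfield localization, so the localization of the bimodule can be pulled outside the $\RHom$; then one recognizes the resulting complex as $C_{sg}(A,A)$ via Buchweitz' description of $\sgh$ as an idempotent completion of a colimit. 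The graded-algebra statement follows since all identifications are compatible with composition/cup product.

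The main obstacle, I expect, is precisely this interchange of the two Verdier localizations (one on each tensor factor of $\ca^e$) with the derived $\Hom$ out of the identity bimodule, together with keeping careful track that the "completed" singularity category $\sgh(A^e)$ — rather than $\sg(A^e)$ itself — is what naturally appears: one needs to show that localizing $\ca^e$-bimodules by $\per\ten\ca \,+\, \ca\ten\per$ and then taking endomorphisms of the diagonal gives the \emph{same} answer as localizing only on one side (or in the completed sense), which is a non-formal statement requiring smoothness. A secondary technical point is the comparison, at the level of the canonical dg enhancement of a Verdier quotient \cite{Keller99,Drinfeld04}, between the Hochschild complex of $\sg_{dg}(A)$ computed intrinsically and the one computed via the bimodule model; this is a dg-Morita invariance argument in the spirit of \cite{Keller03,LowenVandenBergh05} but must be carried out in the non-noetherian, unbounded setting, which is why the pseudocoherent derived category $\cd^{pc}_{dg}(A)$ is the right object to work with throughout. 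Example~\ref{ex:non separable} shows the smoothness hypothesis is genuinely needed, so any proof of invertibility must use it in an essential way — and the place it enters is exactly the localization-interchange step above.
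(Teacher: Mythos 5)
Your high-level intuition is largely on target — smoothness enters through perfectness of the diagonal bimodule, and the whole problem is a comparison of two localizations of bimodule categories — but the proposal is missing the concrete mechanism that actually makes the construction work, and contains a couple of claims that do not hold as stated.

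First, the construction of $\Phi$. You propose to build $\Phi$ at the cochain level by "sending a singular cochain to the corresponding multilinear operation on the localized morphism complexes." This is stronger than what the theorem asserts and is in fact left as an open conjecture in the paper (Section~\ref{s:B-infinity} explains explicitly why a cochain-level lift is problematic). The paper constructs $\Phi$ only at the level of graded Hom-algebras, via a concrete functor between derived bimodule categories. Concretely, with $B$ the dg endomorphism algebra of the object $A$ in a cofibrant model $\cm$ of $\cd^{pc}_{dg}(A)$, one takes the fully faithful inclusions $B\ten B^{op}\hookrightarrow B\ten\cm^{op}\hookrightarrow\cm\ten\cm^{op}$ and forms the composite $H=F^!G^*:\cd(B^e)\to\cd(\cm^e)$, i.e.\ the \emph{right} adjoint $F^!$ composed with the \emph{left} adjoint $G^*$. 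Neither the pure-left nor the pure-right composite works: $H$ must be fully faithful (forcing $F^!$, not $F^*$) and simultaneously take the category of bounded complexes of projective $A^e$-modules into the subcategory $\cn$ that is to be killed. The key lemma that makes this possible — that $F^*G^*(P)\to F^!G^*(P)$ is invertible whenever $P$ is an arbitrary sum of copies of $B^e$ — is absent from your outline, and without it there is no way to show that $H$ sends $\Proj A^e$ (and hence its closure under finite extensions) into $\cn$ and that $H$ descends to a fully faithful functor $\sgh(A^e)\to\cd(\cm^e)/\cn$ sending $A$ to $I_\cm$. Your "Morita equivalence $\ca^e\simeq(A^e)$-dg-modules" is too coarse: $B^e\hookrightarrow\cm^e$ is not a Morita equivalence, and the adjunction asymmetry is the whole point.

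Second, the invertibility argument. Your description of where smoothness enters — that $\RHom_{\ca^e}(I_\ca,-)$ commutes with coproducts and localizations — is a reasonable slogan, but the paper's actual use is via a Thomason--Neeman-type fully faithfulness lemma (à la Lemma~2.3 of \cite{Neeman92a}): the canonical functor $\per(\cm^e)/\per(\cp^e)\to\cd(\cm^e)/\cn$ is fully faithful, precisely because $I_\cm$ is compact (= perfect) in $\cd(\cm^e)$. This reduces the problem to a statement in $\per(\cm^e)/\per(\cp^e)$, which is then settled by an explicit right-orthogonality computation: one checks, by writing down left fractions and using compactness of the Yoneda images of objects of $\cm$, that $I_\cm$ is right orthogonal in $\per(\cm^e)/\per(\cp^e)$ to the Yoneda images of $\cp\ten\cm^{op}+\cm\ten\cp^{op}$. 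Your phrase "local cohomology / completion of $A$ along the perfect bimodules" does not correspond to any step in the actual argument and is likely to lead you astray. So while you correctly identified the interchange of one-sided and two-sided localization as the crux and smoothness as the enabling hypothesis, the two concrete lemmas that carry the proof — the $F^*G^*\iso F^!G^*$ statement on free modules and the Thomason--Neeman fully faithfulness — are not present, and filling them in is the real content of the proof.
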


Let $P$ be a right bounded complex of projective $A^e$-modules.
For $q\in\Z$, let $\sigma_{>q} P$ and $\sigma_{\leq q} P$
denote its stupid truncations:
\begin{align*}
\sigma_{> q}P:
\xymatrix{\ldots \ar[r] & 0 \ar[r] & P^{q+1} \ar[r] & P^{q+1} \ar[r] & \ldots} \\
\sigma_{\leq q}P:
\xymatrix{\ldots \ar[r]  & P^{q-1} \ar[r] & P^q \ar[r] & 0 \ar[r] & \ldots}
\end{align*}
so that we have a triangle
\[
\xymatrix{
\sigma_{>q}P \ar[r] & P \ar[r] & \sigma_{\leq q}P \ar[r] & \Sigma \sigma_{>q}P.}
\]
We have a direct system
\[
\xymatrix{
P=\sigma_{\leq 0}P \ar[r] & \sigma_{\leq -1} P \ar[r] & \sigma_{\leq -2} P \ar[r] & \ldots \ar[r] & P_{\leq q} 
\ar[r] & \ldots}.
\]

\begin{lemma} \label{lemma:sgh} Let $L\in\cd^b(\Mod A^e)$.
We have a canonical isomorphism
\[
\colim \Hom_{\cd A^e}(L, \sigma_{\leq q}P) \iso
\Hom_{\sgh(A^e)}(L, P).
\]
In particular, if $P$ is a projective resolution of $A$ over $A^e$, we have
\[
\colim \Hom_{\cd A^e}(A, \Sigma^n \sigma_{\leq q}P) \iso
\Hom_{\sgh(A^e)}(A, \Sigma^n A)\ko n\in\Z.
\]
\end{lemma}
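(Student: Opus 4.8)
The plan is to identify morphisms in the completed singularity category $\sgh(A^e)$ with morphisms in the bounded derived category $\cd^b(\Mod A^e)$ out of a fixed object into the direct system of stupid truncations of $P$. The key structural input is that $\sgh(A^e)$ is, by definition, the Verdier quotient of $\cd^b(\Mod A^e)$ by the thick subcategory $\per$ generated by bounded complexes of projectives, and that a right bounded complex of projectives $P$, while not itself an object of $\cd^b(\Mod A^e)$, is realized inside $\sgh(A^e)$ as a homotopy colimit of its stupid truncations $\sigma_{\leq q}P$: indeed the cone of $\sigma_{\leq q}P \to P$ is $\Sigma\sigma_{>q}P$, a bounded complex of projectives, hence zero in $\sgh(A^e)$, so each structure map $\sigma_{\leq q}P \to \sigma_{\leq q-1}P$ becomes an isomorphism in the quotient and $P \iso \sigma_{\leq 0}P$ there as well. (In the second statement, where $P$ is a projective resolution of $A$, we moreover have $P \iso A$ in $\cd(\Mod A^e)$, so the target is literally $\Hom_{\sgh(A^e)}(A,\Sigma^n A)$.)

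First I would recall the standard description of Hom-groups in a Verdier quotient by a \emph{thick} subcategory $\cn \subset \ct$: a morphism $L \to X$ in $\ct/\cn$ is a roof $L \leftarrow L' \to X$ with cone of $L' \to L$ lying in $\cn$, and in the presence of nice finiteness/compactness hypotheses one often gets a cleaner formula. Here I would argue directly: given $q$, the triangle $\sigma_{>q}P \to P \to \sigma_{\leq q}P \to \Sigma\sigma_{>q}P$ shows that in $\sgh(A^e)$ the maps $\sigma_{\leq q}P \to \sigma_{\leq q-1}P \to \cdots$ are all isomorphisms onto the image of $P$; applying $\Hom_{\sgh(A^e)}(L,-)$ and using that the localization functor $\cd^b(\Mod A^e) \to \sgh(A^e)$ induces a map $\Hom_{\cd A^e}(L,\sigma_{\leq q}P) \to \Hom_{\sgh(A^e)}(L,\sigma_{\leq q}P) = \Hom_{\sgh(A^e)}(L,P)$, these assemble to a map from the colimit.

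To see this colimit map is an isomorphism I would verify surjectivity and injectivity separately, exploiting that $L$ is a \emph{bounded} complex and the $\sigma_{>q}P$ become highly connected as $q \to -\infty$. For surjectivity: any morphism $L \to P$ in $\sgh(A^e)$ is represented by a roof $L \xleftarrow{s} L' \xrightarrow{f} P$ with $\cone(s) \in \per$, hence (as in Lemma~\ref{lemma:fully-faithful}, since $\cone(s)$ is a bounded complex of projectives and everything in sight is bounded) $s$ can be taken to be an honest quasi-isomorphism after absorbing the cone, so the roof is equivalent to an actual morphism $L \to P$ in $\cd^b(\Mod A^e)$... except $P$ is unbounded below, so more carefully: $L$ being bounded, $\Hom_{\cd A^e}(L,P) = \colim_q \Hom_{\cd A^e}(L,\sigma_{\leq q}P)$ already, because $\RHom(L,\sigma_{>q}P)$ is concentrated in degrees $\to +\infty$ and vanishes in the relevant range for $q \ll 0$; combining this with the identification $\Hom_{\sgh(A^e)}(L,P) = \Hom_{\sgh(A^e)}(L,\sigma_{\leq q}P)$ and the fact that for the quotient by $\per$ one has, again by boundedness of $L$ and $\sigma_{\leq q}P$ together with Lemma~\ref{lemma:fully-faithful}-type factorization, $\Hom_{\cd A^e}(L,\sigma_{\leq q}P) \to \Hom_{\sgh(A^e)}(L,\sigma_{\leq q}P)$ eventually surjective and with kernel stabilizing, yields the claim in the colimit. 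For injectivity: if $g\colon L \to \sigma_{\leq q}P$ becomes zero in $\sgh(A^e)$, then $g$ factors through some object of $\per$, i.e. through a bounded complex of projectives $P'$; but then for $q' \ll q$ the composite $L \to \sigma_{\leq q}P \to \sigma_{\leq q'}P$ kills this factorization because $\Hom_{\cd A^e}(P', \sigma_{>q'}\!/\!\cdots)$ vanishes in the range forced by boundedness of $P'$, so $g$ maps to zero further along the direct system.

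The main obstacle, and the step deserving the most care, is the injectivity/surjectivity bookkeeping at finite stage $q$: one must pin down \emph{why} a morphism in the Verdier quotient $\cd^b(\Mod A^e)/\per$ between two \emph{bounded} complexes is represented by an honest morphism up to further shrinking, i.e. why the calculus of fractions collapses here. This is exactly the mechanism already used in Lemma~\ref{lemma:fully-faithful}: morphisms out of a bounded (or finitely-built) complex into $\per$ factor through bounded complexes of projectives, and any such map $L \to P'$ with $P'$ bounded of projectives becomes zero after pushing $P$ far enough down its truncation tower, because $\Hom_{\cd A^e}(P', \sigma_{\leq q}P)$ depends only on a bounded window of $P$ and that window is eventually disjoint from where $P'$ lives. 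Once this finite-stage comparison is in hand, passing to the colimit over $q$ and commuting it past the (finitely many nonzero) $\Hom$-groups coming from the bounded complex $L$ is routine, and the "in particular" clause follows by taking $L = A$ and $P$ a projective resolution of $A$, shifting by $\Sigma^n$.
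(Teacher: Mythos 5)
Your approach differs from the paper's in a way that introduces a real gap in the surjectivity half. The paper computes $\Hom_{\sgh(A^e)}(L,P)$ as the colimit of $\Hom_{\cd A^e}(L,P')$ over \emph{left} fractions $P \to P'$ whose cone lies in $\per$, and the entire proof is the observation that the morphisms $P \to \sigma_{\leq q}P$ are cofinal in that indexing category: given any $P\to P'$ with cone $Q\in\per$, the map $Q[-1]\to P$ from the bounded complex of projectives $Q[-1]$ factors through $\sigma_{>q}P$ for $q\ll 0$ (by boundedness of $Q$), and this produces a map $P'\to\sigma_{\leq q}P$ under $P$. That single cofinality step replaces both your surjectivity and injectivity arguments. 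Your injectivity sketch (a map in $\sgh$ is zero iff it factors through an object of $\per$ in $\cd$, and such a factorization dies after truncating past the bounded window of the intermediate object) is essentially the same mechanism viewed from the other side and could be made to work. The surjectivity argument, however, is broken.

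Concretely, the claim that the roof $L \xleftarrow{s} L' \to P$ ``can be taken to be an honest quasi-isomorphism after absorbing the cone'' is false: $\mathrm{Cone}(s)\in\per$ need not be acyclic, so $s$ is not a quasi-isomorphism, and a morphism in the Verdier quotient $\sgh(A^e)$ is genuinely a fraction, not a morphism of $\cd A^e$ in disguise. You then assert $\Hom_{\cd A^e}(L,P)=\colim_q \Hom_{\cd A^e}(L,\sigma_{\leq q}P)$ ``because $\RHom(L,\sigma_{>q}P)$ is concentrated in degrees $\to +\infty$''. The degrees actually go to $-\infty$: $\sigma_{>q}P$ is concentrated in degrees $[q+1,0]$ with $q\to -\infty$, so the Hom complex is concentrated in increasingly negative degrees, and the groups entering the long exact sequence for $\Hom_{\cd A^e}(L,\sigma_{\leq q}P)\to\Hom_{\cd A^e}(L,\sigma_{\leq q-1}P)$ are $\Ext^{-q}(L,P^q)$ and $\Ext^{-q+1}(L,P^q)$, which for $L$ of infinite projective dimension do not vanish as $q\to -\infty$. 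So the direct system is not eventually constant and the colimit is in general strictly larger than $\Hom_{\cd A^e}(L,P)$ --- indeed that discrepancy is exactly the phenomenon singular Hochschild cohomology is designed to capture. The missing idea is the paper's cofinality argument, which supplies surjectivity at each finite stage without any erroneous stabilization of the derived Hom groups.
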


\begin{proof} Clearly, if $Q$ is a bounded complex of projective modules,
each morphism $Q \to P$ in the derived category $\cd A^e$ factors through
$\sigma_{>q}P \to P$ for some $q\ll 0$. This shows that the morphisms
$P \to \sigma_{\leq q}P$ form a cofinal subcategory in the category
of morphisms $ P \to P'$ whose cylinder is a bounded complex of
projective modules. Whence the claim.
\end{proof}

\subsection{Proof of Theorem~\ref{thm:non-noetherian}}
We refer to \cite{Keller94, Keller06d, Toen14} for foundational material
on dg categories. We will follow the terminology of \cite{Keller06d} and
use the model category structure on the category of dg categories
constructed in \cite{Tabuada05}.
For a dg category $\ca$, denote by $X \mapsto Y(X)$ the dg
Yoneda functor and by $\cd\ca$  the derived category. We write
$\ca^e$ for the enveloping dg category 
$\ca\lten_k\ca^{op}$ and $I_\ca$ for the {\em identity bimodule}
\[
I_\ca: (X,Y) \mapsto \ca(X,Y).
\]
By definition, the Hochschild cohomology of $\ca$ is the graded endomorphism
algebra of $I_\ca$ in the derived category $D(\ca^e)$.
In the case of the algebra $A$, the identity bimodule is the $A$-bimodule $A$.
Recall that if $F: \ca \to \cb$ is a fully faithful dg functor, the restriction
$F_*: \cd\cb \to \cd \ca$ is a localization functor admitting fully faithful left and
right adjoint functors $F^*$ and $F^!$ given respectively by
\[
F^*: M \mapsto M\lten_\ca \mbox{}_F\cb \quad\mbox{and}\quad
F^!: N \mapsto \RHom_\ca(\cb_F,N) \ko
\]
where $\mbox{}_F\cb=\cb(?,F-)$ and $\cb_F=\cb(F?,-)$. 

Let $\cm_0=\cc_{dg}^{-,b}(\proj A)$ denote the dg category of right bounded
complexes of finitely generated projective $A$-modules with bounded homology.
Notice that the morphism complexes of $\cm_0$ have terms which involve
infinite products of projective $A$-modules so that in general, the
morphism complexes of $\cm_0$ will not be cofibrant over $k$.
Let $\cm \to \cm_0$ be a cofibrant resolution of $\cm_0$. 
We assume, as we may, that the quasi-equivalence $\cm\to\cm_0$
is the identity on objects. Notice that the morphism complexes of $\cm$ are
cofibrant over $k$ so that we have $\cm\lten_k \cm^{op} \iso \cm\ten \cm^{op}$.
Let $\cp\subset \cm$ be the full dg subcategory of $\cm$ formed
by the bounded complexes of finitely generated projective $A$-modules.
Let $\cs$ denote 
the dg quotient  $\cm/\cp$. We assume, as we may, that $\cs$ is cofibrant. In the
homotopy category of dg categories, we have an isomorphism
between $\sg_{dg}(A)$ and $\cs=\cm/\cp$.
Let $B$ be the dg endomorphism algebra of $A$ considered as an object of
$\cp \subset \cm$. Notice that we have a quasi-isomorphism $B \to A$
and that both $B$ and $A$ are cofibrant over $k$.
We view $B$ as a dg category with one object whose endomorphism
algebra is $B$. We have the obvious inclusion and projection dg functors
\[
\xymatrix{
B \ar[r]^i & \cm \ar[r]^p & \cs.}
\]
Consider the fully faithful dg functors
\[
\xymatrix{
B\ten B^{op} \ar[r]^-{\id\ten i} & B\ten \cm^{op} \ar[r]^-{i\ten\id} & \cm\ten\cm^{op}}.
\]
The restriction along $G=\id\ten i$ admits the left adjoint $G^*$ given by
\[
G^*: X \mapsto \cm\mbox{}_i \lten_B X \ko
\]
and the restriction along $F=i\ten\id$ admits the fully faithful
left and right adjoints $F^*$ and $F^!$ given by
\[
F^*: Y \mapsto Y \lten_B \mbox{}_i\cm \quad\mbox{and} \quad
F^!: Y \mapsto \RHom_B(\cm_i, Y).
\]
Since $F^*$ and $F^!$ are the two adjoints of a localization functor,
we have a canonical morphism $F^* \to F^!$. 

\begin{lemma} If $P$ is an arbitrary sum of copies of $B^e$, the morphism
\[
F^* G^* (P) \to F^! G^* (P)
\]
is invertible. 
\end{lemma}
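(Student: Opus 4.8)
The plan is to reduce the claim to a statement about bimodules over $B$ alone, where the adjunction defining $F^* \to F^!$ becomes transparent, and then to recognize that the counit/unit map being inverted is — after applying the restriction $F_*$ — the canonical comparison between a derived tensor and a derived Hom against a \emph{perfect} bimodule, which is always invertible. More precisely, write $Z = G^*(P) = \cm{}_i \lten_B P$; since $P$ is a sum of copies of $B^e = B\ten B^{op}$ and the functor $G^*$ preserves coproducts, $Z$ is a coproduct of copies of $\cm{}_i \lten_B B^e \simeq \cm{}_i \ten B^{op}$, i.e. a $B$-$\cm$-bimodule that is free (a coproduct of shifts of representables) in the $\cm$-variable. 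The morphism to be inverted, evaluated on $Z$, is the natural transformation $Z \lten_B \mbox{}_i\cm \to \RHom_B(\cm_i, Z)$ of $\cm$-$\cm$-bimodules.

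First I would restrict along $F_* : \cd(\cm\ten\cm^{op}) \to \cd(B\ten\cm^{op})$; since $F_*$ is conservative on the essential image of $F^*$ and $F^!$ (both are fully faithful), and since $F_* F^* = \mathbf{1}$, $F_* F^! = \mathbf{1}$, it suffices to check that the induced map $F_* F^* G^*(P) \to F_* F^! G^*(P)$ — which is an endomorphism of $Z$ in $\cd(B\ten\cm^{op})$, namely the composite of the counit of $(F^*, F_*)$ with the unit of $(F_*, F^!)$ — is the identity. Actually the cleanest route: both $F^*$ and $F^!$ are fully faithful left/right adjoints of the \emph{same} localization $F_*$, so the canonical map $F^* \to F^!$ is invertible on an object $Y$ precisely when $Y$ lies in the image of $F^*$, equivalently when the unit $Y \to F_* F^* Y$ — no; equivalently, when $Y$ is right orthogonal to $\ker F_*$ as well as left orthogonal. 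The concrete criterion I would use is: $F^*(Y) \to F^!(Y)$ is invertible iff for every $\cm$-$\cm$-bimodule $N$ with $F_* N = 0$ one has $\Hom_{\cd(\cm^e)}(F^*Y, N) = 0 = \Hom_{\cd(\cm^e)}(N, F^!Y)$. The first vanishing is automatic by adjunction ($= \Hom(Y, F_*N) = 0$); so the content is the second, which by adjunction reads $\Hom_{\cd(B\ten\cm^{op})}(F_* N, Y) $ — also automatic. Hence in fact $F^* \to F^!$ is \emph{always} invertible on objects of the form $F^*(Y)$; the real point is therefore to show $G^*(P) = \cm{}_i \lten_B P$ lies in $\Im F^*$, i.e. is induced up from a $B$-$\cm$-bimodule via $- \lten_B \mbox{}_i\cm$.

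So the key step is: $G^*(P) \simeq F^*(W)$ for some $W \in \cd(B\ten B^{op})$. Take $W = P$ itself (a sum of copies of $B^e$, viewed as a $B$-$B$-bimodule). Then $F^*(W) = P \lten_B \mbox{}_i\cm$, and I claim this agrees with $G^*(P) = \cm{}_i\lten_B P$. Both are computed by resolving $P$; since $P$ is already a coproduct of copies of $B\ten B^{op}$, which is cofibrant as a $B$-bimodule, no resolution is needed, and one gets $P\lten_B \mbox{}_i\cm = \bigsqcup (B\ten B^{op})\ten_B \mbox{}_i\cm = \bigsqcup B \ten \mbox{}_i\cm = \bigsqcup \mbox{}_i\cm$, and symmetrically $\cm{}_i \lten_B P = \bigsqcup \cm{}_i$; these are the same $\cm$-$\cm$-bimodule (the free bimodule on a coproduct of pairs (representable, corepresentable)) up to the canonical flip — and this identification is exactly the statement that $G^* P$ lies in the image of $F^*$. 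Once that is in place, the lemma follows from the general nonsense above: $F^*(W) \to F^!(W)$ is invertible for \emph{every} $W$ (being the comparison of two fully faithful adjoints of a localization, evaluated on something in the image of the left adjoint — but here more simply, $F^*$ being fully faithful means $F^*W \simeq F^!F_*F^*W$ once one knows $F_*$ kills the cone, which it does since $F_*(\text{cone}) $ sits in $\ker$ of a conservative functor\ldots).

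The main obstacle I expect is bookkeeping around cofibrancy and the identification of $G^*P$ with $F^*P$: one must be careful that $\cm$ (not $\cm_0$) is used so that the bimodules $\mbox{}_i\cm$, $\cm_i$ are cofibrant over $k$ and the derived tensor products are computed naively on the relevant generators; and one must check the flip identification $\bigsqcup \cm{}_i \simeq \bigsqcup {}_i\cm$ as $\cm$-$\cm$-bimodules is the one induced by $F^* \to F^!$, not merely an abstract isomorphism. A clean way to finish, avoiding the general-localization argument if it feels too slick, is to verify invertibility objectwise on $\cm$: evaluating $F^*G^*(P) \to F^!G^*(P)$ at a pair $(X,X')$ of objects of $\cm$, with $P = \bigsqcup B^e$, reduces to showing $\bigsqcup \cm(X, A)\ten \cm(A,X') \to \bigsqcup \RHom_B(\cm(A,X)\,,\,\cm(X'',A)\ten\ldots)$ — concretely, that the map $\cm(?,A)\ten_B \cm(A,-) \to \RHom_B(\cm(A,?),\cm(A,-)\ten B)$ is a quasi-isomorphism, which holds because $B$ is cofibrant as a right $B$-module over itself (indeed free of rank one), so $\RHom_B(B,-) = (-)$ with no derived correction, and both sides equal $\cm(?,A)\ten\cm(A,-)$. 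That finite check, over the generators, is the heart of it; everything else is formal manipulation of the adjunctions $G^* \dashv G_*$ and $F^* \dashv F_* \dashv F^!$.
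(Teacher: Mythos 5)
Your proposal has a fatal flaw in the ``general nonsense'' reduction, and the concrete check at the end also misses the actual content of the lemma.

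\textbf{The orthogonality argument is wrong.} You assert that $F^*(Y)\to F^!(Y)$ is invertible iff $\Hom(F^*Y,N)=0=\Hom(N,F^!Y)$ for all $N\in\ker F_*$, and then observe that both vanishings are automatic. But these two conditions say, respectively, $F^*Y\in{}^\perp(\ker F_*)$ and $F^!Y\in(\ker F_*)^\perp$ --- which is \emph{always} true, for every $Y$, and says nothing about whether $F^*Y$ and $F^!Y$ agree. The correct criterion is that $F^*Y\in(\ker F_*)^\perp$, i.e. $\Hom(N,F^*Y)=0$ for $N\in\ker F_*$, and this is \emph{not} automatic and is \emph{not} what you checked. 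If your reasoning were valid, $F^*\to F^!$ would be invertible on every $Y$, forcing $\Im F^*=\Im F^!$ and hence $\ker F_*=0$, which is false. There is also a category mismatch in the intermediate step: you write $F^*(W)$ for $W=P\in\cd(B^e)$, but $F^*$ is defined on $\cd(B\ten\cm^{op})$, not $\cd(B^e)$, so ``$G^*(P)\simeq F^*(P)$'' does not type-check; $G^*(P)$ lives in $\cd(B\ten\cm^{op})$ while $F^*(-)$ takes values in $\cd(\cm^e)$.

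\textbf{The concrete check misses the point.} Evaluating at a general pair $(M,L)$ of objects of $\cm$, the first argument of $\RHom_B$ is $\cm(B,M)$, which under $\cd(B)\iso\cd(A)$ corresponds to the arbitrary right-bounded complex $M$ of finitely generated projectives with bounded homology --- it is \emph{not} the free module $B$, so $\RHom_B(B,-)=(-)$ is a non-sequitur except at $M=A$. After the identifications, the actual statement to prove is that
\[
\bigoplus_J L\ten \Hom_A(M,A) \;\longrightarrow\; \Hom_A\bigl(M,\textstyle\bigoplus_J L\ten A\bigr)
\]
is a quasi-isomorphism, and this is where all the finiteness hypotheses on $\cm$ enter: one reduces, by dévissage on $L\in\cd^b(\Mod A)$, to $L$ a single module, and then uses that each component $M^q$ is a \emph{finitely generated} projective to commute $\Hom_A(M^q,-)$ with the coproduct $\bigoplus_J$. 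Your argument never invokes these finiteness conditions, which is a sign it cannot be right: the statement genuinely fails if $\cm$ is replaced by unbounded or non-finitely-generated complexes.
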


\begin{proof} Let $P$ be the direct sum of copies of $B^e$ indexed by a 
set $J$. Since $F^*$ and $G^*$ commute with (arbitrary) coproducts, 
the left hand side is the dg module
\[
\bigoplus_J \cm(i?,-)\lten_B (B\ten B) \lten_B \cm(?,i-) = 
\bigoplus_J \cm(B,-)\ten \cm(?,B),
\]
The right hand side is the dg module
\[
\RHom_B(\cm_i, \cm_i \lten_B (\bigoplus_J B\ten B)) = 
\RHom_B(\cm_i, \bigoplus_J \cm(B,-)\ten B).
\]
Let us evaluate the canonical morphism at $(M,L)\in \cm\ten\cm^{op}$. We find
the canonical morphism
\[
\bigoplus_J \cm(B,L) \ten \cm(M,B) \to \RHom_B(\cm(B,M),\bigoplus_j \cm(B,L)\ten B).
\]
We have quasi-isomorphisms
\[
\cm(B,L) \ten \cm(M,B) \to \cm_0(A,L) \ten \cm(M,B) \to L\ten \cm(M,B) \to L\ten \Hom_A(M,A)
\]
because $\cm(M,B)$ and $L$ are cofibrant over $k$. Now the equivalence
$\cd(B) \iso \cd(A)$ takes $\cm(B,L)\ten B$ to $\cm(B,L)\ten A \iso L\ten A$.
We have an quasi-isomorphism of dg $B$-modules $\cm(B,M) \iso \cm_0(A,M)=M$
and so the equivalence $\cd(B) \iso \cd(A)$ takes $\cm(B,M)$ to $M$.
Whence an isomorphism
\[
\RHom_B(\cm(B,M), \bigoplus_J \cm(B,L) \ten B) \iso
\RHom_A(M,\bigoplus_J L\ten A) = \Hom_A(M,\bigoplus_J L\ten A).
\]
Thus, we have to show that the canonical morphism
\[
\bigoplus_J L\ten \Hom_A(M,A) \to 
\Hom_A(M, \bigoplus_J L\ten A)
\]
is a quasi-isomorphism. Recall that $L$ and $M$ are right bounded complexes
of finitely generated projective modules with bounded homology. We fix $M$
and consider the morphism as a morphism of triangle functors with argument
$L\in\cd^b(\Mod A)$. This is possible thanks to the assumption that $k$ is of finite
global dimension. Now we are reduced to the case where $L$ is
in $\Mod A$. In this case, the morphism becomes an isomorphism of complexes
because the components of $M$ are finitely generated projective.
\end{proof}

Let us put $H=F^! G^* : \cd(B^e) \to \cd(\cm^e)$. Let us compute
the image of the identity bimodule $B$ under $H$. We have
\[
H(B) =F^! (\cm_i \lten_B B) = F^!(\cm_i) = \RHom_B(\cm_i, \cm_i)
\]
and when we evaluate at $L$, $M$ in $\cm$, we find
\[
H(B)(L,M)=\RHom_B(\cm(i ?,L),\cm(i ?,M)) = \RHom_B(\cm(B,L),\cm(B,M)).
\]
We have seen in the above proof that the equivalence $\cd(B) \iso \cd(A)$ takes
$\cm(B,L)$ to $L$. Whence quasi-isomorphisms
\begin{align*}
H(B)(L,M) = \RHom_B(\cm(B,L),\cm(B,M))  & \iso \RHom_A(L,M) = \Hom(L,M) \\
 & \liso \cm(L,M).
\end{align*}
Thus, the functor $H$ takes the identity bimodule $B$ to the identity
bimodule $I_\cm$. Since $F^!$ and $G^*$ are fully faithful
so is $H$. Denote by $\cn$ the image under the composition of $H$ 
with $\cd(A^e) \iso \cd(B^e)$ of the closure of $\Proj A^e$
under finite extensions. Then $H$ yields a fully faithful functor
\[
\sgh(A^e) \to \cd(\cm^e)/\cn
\]
taking the bimodule $A$ to the identity bimodule $I_\cm$.
Now notice that we have a Morita morphism of dg categories
\[
\cs^e \liso \frac{\cm\ten\cm^{op}}{\cp\ten\cm^{op}+\cm\ten\cp^{op}}.
\]
The functor $p^*: \cd(\cm^e) \to \cd(\cs^e)$ induces the quotient functor
\[
\xymatrix{
\frac{\cd(\cm\ten\cm^{op})}{\cn} \ar[r] &
\frac{\cd(\cm\ten\cm^{op})}{\cd(\cp\ten\cm^{op}+\cm\ten\cp^{op})}
=\cd(\cs^e)}.
\]
Since $p:\cm\to\cs$ is a localization, the image $p^*(I_\cm)$ is isomorphic
to $I_\cs$ (Proposition 3.10 a) of \cite{Keller11b}). 
This yields the morphism $\Phi$. Now let us assume
that $\cm$ is smooth. Since $H$ is fully faithful, to show that $\Phi$ is an 
isomorphism, it suffices to show that $p^*$ induces bijections
\[
\xymatrix{
\Hom_{\cd(\cm^e)/\cn}(I_\cm, \Sigma^n I_\cm) \ar[r] &
\Hom_{\cd(\cs^e)}(p^*(I_\cm), \Sigma^n p^*(I_\cm))}
\]
for each $n\in \Z$. Now by our smoothness assumption, the object
$I_\cm$ lies in $\per(\cm^e)$. The inclusion $\cp^e \to \cm^e$ induces
an equivalence of $\per(\cp^e)$ onto a full triangulated subcategory
of $\cn$.

\begin{lemma} The canonical functor
\[
\xymatrix{
\frac{\per(\cm^e)}{\per(\cp^e)} \ar[r] & \frac{\cd(\cm^e)}{\cn} }
\]
is fully faithful.
\end{lemma}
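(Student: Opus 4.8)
The plan is to deduce the lemma from the standard criterion for a functor between Verdier quotients to be fully faithful: if $\ct$ is a triangulated category, $\cn\subset\ct$ a triangulated subcategory, $\cu\subset\ct$ a full triangulated subcategory and $\cv\subset\cu\cap\cn$ a full triangulated subcategory such that every morphism from an object of $\cu$ to an object of $\cn$ factors through an object of $\cv$, then $\cu/\cv\to\ct/\cn$ is fully faithful. (This is the usual calculus-of-fractions argument: represent a morphism of $\ct/\cn$ between objects of $\cu$ by a roof $X\xleftarrow{s}Z\xrightarrow{f}Y$ whose denominator has cone in $\cn$; rotating the triangle on $s$ yields a morphism $X\to N$ with $N\in\cn$, which factors through some $N'\in\cv$; the fibre $Z'$ of $X\to N'$ then lies in $\cu$, admits a morphism to $Z$ over $X$, and the roof $X\leftarrow Z'\to Y$ has denominator with cone $N'\in\cv$, so it represents the same morphism while lying in $\cu/\cv$. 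Faithfulness is obtained in the same way, starting from a roof that becomes zero in $\ct/\cn$.) We apply this with $\ct=\cd(\cm^e)$, $\cu=\per(\cm^e)$, $\cv=\per(\cp^e)$ — a full triangulated subcategory of $\per(\cm^e)\cap\cn$ by the remark preceding the lemma — and $\cn$ as above. It then remains to verify the \emph{factorization property}: every morphism from an object of $\per(\cm^e)$ to an object of $\cn$ factors through $\per(\cp^e)$.

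We first pin down $\cn$. By the first lemma of this proof, $H$ sends a direct sum $\bigoplus_J B^e$ of copies of $B^e$ to $\bigoplus_J\cm(B,-)\ten\cm(?,B)$, i.e.\ to the direct sum of copies of the representable $\cm^e$-module $R$ at the object $(A,A)$. Since a projective $A^e$-module corresponds, under $\cd(A^e)\iso\cd(B^e)$, to a direct summand of such a sum, and since $H$ is exact, $\cn$ — which is the triangulated subcategory of $\cd(\cm^e)$ generated by the images of the projective $A^e$-modules — is contained in the triangulated subcategory generated by $\add(R)$. Moreover, as $A$ is an object of $\cp$, the module $R$ is the image of the corresponding representable $\cp^e$-module under the induction functor $\cd(\cp^e)\to\cd(\cm^e)$, so $R\in\per(\cp^e)$. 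The factorization property is clearly inherited under shifts and under direct summands, so it suffices to prove it (i) for objects of $\add(R)$ and (ii) for the middle term $N$ of a triangle $N_1\to N\to N_2\to\Sigma N_1$ whose outer terms have it. For (i): every object of $\per(\cm^e)$ is compact in $\cd(\cm^e)$, so a morphism from $U\in\per(\cm^e)$ to $\bigoplus_J R$ factors through a finite subsum, which lies in $\per(\cp^e)$; composing with a retraction onto the relevant summand of $\bigoplus_J R$ settles (i).

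For (ii), let $f\colon U\to N$ with $U\in\per(\cm^e)$. Factor the composite $U\to N\to N_2$ as $U\xrightarrow{\beta}V_2\to N_2$ with $V_2\in\per(\cp^e)$ and let $W$ be the fibre of $\beta$, with structure morphism $a\colon W\to U$, so $W\in\per(\cm^e)$. The composite $W\xrightarrow{a}U\xrightarrow{f}N$ becomes zero in $N_2$, hence factors through $N_1$, and by hypothesis the induced morphism $W\to N_1$ factors as $W\xrightarrow{w}V_1\to N_1$ with $V_1\in\per(\cp^e)$. Writing $c\colon V_1\to N$ for the composite $V_1\to N_1\to N$, one has $f\circ a=c\circ w$, so $(f,c)\colon U\oplus V_1\to N$ vanishes after restriction along $W$ and therefore factors through the homotopy pushout $P$ of $U\xleftarrow{a}W\xrightarrow{w}V_1$, say via $\tilde f\colon P\to N$ restricting to $f$ on $U$. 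Since the cone of $a$ is $V_2$, the object $P$ sits in a triangle $V_1\to P\to V_2\to\Sigma V_1$, so $P\in\per(\cp^e)$; as $f$ factors through $P$, this proves (ii), hence the factorization property, hence the lemma.

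The only step carrying real content is the base case (i), and it is exactly what the first lemma of this proof was arranged to supply: once $H$ of the projective $A^e$-modules is recognized as summands of direct sums of copies of the \emph{perfect} $\cm^e$-module $R\in\per(\cp^e)$, compactness of the objects of $\per(\cm^e)$ does the rest, and the d\'evissage to all of $\cn$ together with the full faithfulness criterion is formal. The one point requiring a bit of care is the appeal to the homotopy pushout in (ii): that construction is not functorial in a triangulated category, but the gluing we need exists because of the identity $f\circ a=c\circ w$, which is forced by the way $w$ was produced.
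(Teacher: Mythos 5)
Your proof is correct and takes essentially the same approach as the paper's: both reduce full faithfulness of the quotient functor to a Neeman/Verdier-type criterion (the paper cites Lemma~2.3 of \cite{Neeman92a}), argue by induction on the number of extensions needed to build an object of $\cn$ from sums of representables, and invoke compactness of perfect objects to dispose of the base case. The inductive step is packaged slightly differently — the paper splits the denominator of the fraction via the octahedral axiom, while you glue through a homotopy pushout — but this is a cosmetic variant of the same argument.
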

\begin{proof} This is a variation on Lemma~2.3 of \cite{Neeman92a}, cf. also
Lemma~5.3 of \cite{Keller94}. The claim follows from the following statement:
Let $X\in \per(\cm^e)$ and $Y\in \cd(\cm^e)$. Then each angle
\[
\xymatrix{
 & X \ar[d] \\
 Y \ar[r]_y & Y',}
\]
where the cone over $y$ is in $\cn$ may be completed to a commutative
square
\[
\xymatrix{
X' \ar[r]^x  \ar[d] & X \ar[d] \\
Y \ar[r]_y & Y'}
\]
where the cone over $x$ is in $\per(\cp^e)$. By construction, the cone
$N$ over $y$ is an $n$-fold iterated extension of (infinite) sums of
objects $P^\wedge$ for $P\in\cp^e$ for some $n\geq 1$. 
We proceed by induction on $n$. If $n=1$, then $N$ itself is a
sum of objects $P^\wedge$ and since $X$ is compact, the
composition $X \to Y' \to N$ factors through a finite subsum
$N'\subset N$. We can then form a morphism of triangles
\[
\xymatrix{
X' \ar[d] \ar[r]^x & X \ar[d] \ar[r] & N' \ar[d] \ar[r] & \Sigma X' \ar[d] \\
Y \ar[r]_y & Y' \ar[r] & N \ar[r] & \Sigma Y.}
\]
If $n>1$, then $N$ occurs in a triangle
\[
\xymatrix{
N_0 \ar[r] & N \ar[r] & N_1 \ar[r] & \Sigma N_0}
\]
for objects $N_0$ and $N_1$ which are $n'$-fold extensions
of sums of $P^\wedge$ for some $n'<n$. By forming an octahedron
over the morphisms $Y' \to N \to N_1$, we obtain a commutative diagram
\[
\xymatrix{
 & & N_0\ar[d] \ar@{=}[r] & N_0 \ar[d] \\
 Y \ar[d]_{y_2} \ar[r]^y & Y' \ar@{=}[d]  \ar[r] & N \ar[d] \ar[r] &
 					\Sigma Y \ar[d]^{\Sigma y_2} \\
 Y'' \ar[r]_{y_1} & Y' \ar[r] & N_1 \ar[d] \ar[r] & \Sigma Y'' \ar[d] \\
  & & \Sigma N_0 \ar@{=}[r] & \Sigma N_0
}
\]
We see that $y$ is the composition $y_1 \circ y_2$ of two morphisms
whose mapping cones are $n'$-fold extensions of sums of objects
$P^\wedge$, $P\in \cp^e$. By the induction hypothesis, we can find
a commutative diagram
\[
\xymatrix{
X'\ar[r]^{x_2} \ar[d] & X'' \ar[d] \ar[r]^{x_1} & X \ar[d] \\
Y \ar[r]_{y_2} & Y'' \ar[r]_{y_1} & Y',}
\]
where the cones over $x_1$ and $x_2$ lie in $\per(\cp^e)$. By the
octahedral axiom, this also holds for the cone over $x=x_1 \circ x_2$.
\end{proof}

Thus, by the Lemma, it suffices to show that $p^*$ induces bijections in the morphism
spaces with target $I_\cm$
\[
\xymatrix{
\Hom_{\per(\cm^e)/\per(\cp^e)}(?,I_\cm) \ar[r] &
\Hom_{\cd(\cs^e)}(p^*(?), p^*(I_\cm))}.
\]
Since the perfect derived category of $\cs^e$ is equivalent to the
quotient of $\per(\cm^e)$ by the thick subcategory generated by
the images under the Yoneda functor of objects in $\cp\ten\cm^{op}+\cm\ten\cp^{op}$,
it suffices to show that
$I_\cm$ is right orthogonal in $\per(\cm^e)/\per(\cp^e)$ on the images under
the Yoneda functor of the objects in $\cp\ten\cm^{op}+\cm\ten\cp^{op}$.
To show that $I_\cm$ is right orthogonal on $Y(\cm\ten\cp^{op})$, it
suffices to show that it is right orthogonal to an object $Y(M,B)$, $M\in\cm$.
Now a morphism in $\per(\cm^e)/\per(\cp^e)$ is given by a diagram 
of $\per(\cm^e)$ representing a left fraction
\[
\xymatrix{
Y(M,B) \ar[r] & I'_\cm & I_\cm \ar[l]}
\]
where the cone over $I_\cm \to I'_\cm$ lies in $\per(\cp^e)$. For each object $X$
of $\cd(\cm^e)$, we have canonical isomorphisms
\[
\Hom_{\cd\cm^e}(Y(M,B),X) = H^0(X(M,B)) =
\Hom_{\cd\cm}(Y(M), X(?,B)).
\]
Thus, the given fraction corresponds to a diagram in $\cd(\cm)$ of the form
\[
\xymatrix{
Y(M) \ar[r] & I'_\cm(?,B) & I_\cm(?,B)=\cm(?,B) \ar[l]} \ko
\]
where the cone over $I_\cm(?,B) \to I'_\cm(?,B)$ is the image under
$\cd A \iso \cd B\to \cd\cm$ of a bounded complex with projective components
(which may be infinitely generated because as a right module, the
tensor product $A\ten A$ may be infinitely generated).
Thus, the object $I'_\cm(?,B)$ is a direct factor
of a finite extension of shifts of arbitrary coproducts $B$.
Since $Y(M)$ is compact, the given morphism
$Y(M) \to I'_\cm(?,M)$ must then factor through $Y(Q)$ for an object $Q$ of $\cp$.
This means that the given morphism $Y(M,B) \to I'_\cm$ factors through
$Y(Q,B)$, which lies in $\cn$. Thus, the given fraction represents
the zero morphism of $\cd(\cm^e)/\cn$, as was to be shown.
The case of an object in $Y(\cp\ten\cm^{op})$ is analogous.
In summary, we have shown that the maps
\[
\xymatrix{
\sgh(A^e)(A, \Sigma^n A) \ar[r]^-H & (\cd(\cm^e)/\cn)(I_\cm,\Sigma^n I_\cm) 
\ar[r]^-{p^*} & \cd(\cs^e)(I_\cs, \Sigma^n I_\cs)}
\]
are bijective, which implies the assertion on Hochschild cohomology.

\section{Remark on a possible lift to the $B_\infty$-level}
\label{s:B-infinity}

Let $P \to A$ be a resolution of $A$ by 
projective $A$-$A$-bimodules. Let us assume for simplicity that $k$ is a field
so that we can take $\cm=\cm_0$ and $B=A$.
The proof in section~\ref{s:main}
produces in fact isomorphisms in the derived category of
$k$-modules
\begin{align*}
\colim \RHom_{A^e}(A, \sigma_{\leq q}P)  
& \to \colim \RHom_{\cm^e}(I_\cm, H \sigma_{\leq q}P) \\
& \to \colim \RHom_{\cs^e}(I_\cs, p^* H \sigma_{\leq q}P) \\
& = \RHom_{\cs^e}(I_\cs, I_\cs).
\end{align*}
For the bar resolution $P$, the truncation $\sigma_{\leq -q}P$
is canonically isomorphic to $\Sigma^q\Omega^q A$ so that the first
complex carries a canonical $B_\infty$-structure constructed by
Wang \cite{Wang18}.  As explained in the introduction,
it is classical that the last complex carries a canonical
$B_\infty$-structure. It is not obvious to make the intermediate
complexes explicit because the functor $H$, being a composition of
a {\em right adjoint} with a left adjoint to a restriction functor, does not take
cofibrant objects to cofibrant objects.

\section{Proof of Theorem~\ref{thm:hypersurface}}
\label{s:hypersurface}

By the Weierstrass preparation theorem, we may assume that $Q$ is
a polynomial. Let $P_0=k[x_1,\ldots, x_n]$ and $S=P_0/(Q)$. 
Then $S$ has isolated singularities but may have singularities other 
than the origin. Let $\fm$ be the maximal ideal of $P_0$
generated by the $x_i$ and let $R$ be the localization of 
$S$ at $\fm$. Now $R$ is local with
an isolated singularity at $\fm$ and $A$ is isomorphic to the
completion $\widehat{R}$. By Theorem~3.2.7 of \cite{Bach92}, in sufficiently 
high degrees $r$, the Hochschild cohomology of $S$ is isomorphic to 
the homology in degree $r$ of the complex
\[
k[u]\ten K(S, \del_1 Q, \ldots, \del_n Q) \ko
\]
where $u$ is of degree $2$ and $K$ denotes the Koszul complex.
Now $S$ is isomorphic to $K(P_0, Q)$
and so $K(S,\del_1 Q, \ldots,\del_n Q)$ is isomorphic to 
\[
K(P_0, Q, \del_1 Q, \ldots, \del_n Q). 
\]
Since $Q$ has isolated singularities,
the $\del_i Q$ form a regular sequence in $P_0$. So 
\[
K(P_0, Q, \del_1 Q, \ldots, \del_n Q)
\] 
is quasi-isomorphic to 
$K(M,Q)$, where $M=P_0/(\del_1 Q, \ldots, \del_n Q)$. Therefore,
in high even degrees $2r$, the Hochschild cohomology of $S$
is isomorphic to
\[
T=k[x_1, \ldots, x_n]/(Q,\del_1 Q, \ldots, \del_n Q)
\]
as an $S$-module. Since $S$ and $S^e$ are noetherian, this implies
that the Hochschild cohomology of $R$ in high even degrees is
isomorphic to the localisation $T_\fm$.
Since $R\ten R$ is noetherian and Gorenstein 
(cf. Theorem~1.6 of \cite{TousiYassemi03}), by Theorem~6.3.4 of \cite{Buchweitz86},
the singular Hochschild cohomology of $R$ coincides with 
Hochschild cohomology in sufficiently high degrees. 
Since $\C$ is a perfect field and $S$ a finitely generated $\C$-algebra,
the dg derived category $\cd^b_{dg}(\mod S)$ is smooth, by
Theorem~B of \cite{ElaginLuntsSchnuerer18}. 
The dg derived category $\cd^b_{dg}(\mod R)$ is a dg localization
of $\cd^b_{dg}(\mod S)$ and hence is also smooth. Thus, by
part b) of Theorem \ref{thm:main}, the Hochschild cohomology of $\sg_{dg}(R)$
is isomorphic to the singular Hochschild cohomology of $R$ and thus
isomorphic to $T_\fm$ in high even degrees. Since $R$ is a hypersurface,
the dg category $\sg_{dg}(R)$ is isomorphic, in the homotopy category
of dg categories, to the underlying differential $\Z$-graded category of
the differential  $\Z/2$-graded category of matrix factorizations of $Q$,
cf. \cite{Eisenbud80}, \cite{Orlov04} and Theorem~2.49 of
\cite{BlancRobaloToenVezzosi16}. 
Thus, it is $2$-periodic and so is its Hochschild cohomology. 
It follows that the zeroth Hochschild cohomology of $\sg_{dg}(R)$ is 
isomorphic to $T_\fm$ as an algebra. 
The completion functor $?\ten_R \widehat{R}$ yields an embedding
$\sg(R) \to \sg(A)$ through which $\sg(A)$ identifies
with the idempotent completion of the triangulated category $\sg(R)$,
cf. Theorem~5.7 of \cite{Dyckerhoff11}. Therefore, the corresponding
dg functor $\sg_{dg}(R) \to \sg_{dg}(A)$ induces an equivalence
in the derived categories and an isomorphism in Hochschild cohomology.
So we find an isomorphism
\[
HH^0(\sg_{dg}(A), \sg_{dg}(A)) \iso T_\fm.
\]
Since $Q \in k[x_1,\ldots, x_n]_\fm$ has an isolated singularity at the origin, 
we have an isomorphism
\[
T_\fm \iso k[[x_1, \ldots, x_n]]/(Q,\del_1 Q, \ldots, \del_n Q)
\]
with the Tyurina algebra of $A=P/(Q)$.
Now by the Mather--Yau theorem \cite{MatherYau82}, more
precisely by its formal version \cite[Prop. 2.1]{GreuelPham17}, in
a fixed dimension, the Tyurina algebra determines 
$A$ up to isomorphism.

Notice that the Hochschild cohomology of the dg category of matrix
factorizations considered as a differential $\Z/2$-graded category
is different: As shown by Dyckerhoff \cite{Dyckerhoff11}, it is isomorphic
to the Milnor algebra $P/(\del_1 Q, \ldots, \del_n Q)$ in
even degree and vanishes in odd degree. 

\section*{Acknowledgments}
I am very grateful to Zhengfang Wang for inspiring discussions on his results
and on the question which lead to this article. I am indebted to Zheng Hua 
and Xiaofa Chen for comments and to Xiao-Wu Chen for detecting embarrassing 
errors in several earlier versions of this note and in particular for
Example~\ref{ex:non separable}. I thank Greg Stevenson
for reference \cite{Efimov12}, Liran Shaul for reference \cite{TousiYassemi03}
and Amnon Yekutieli for pointing out a confusing misprint in the statement of
Lemma~\ref{lemma:sgh}.

\def\cprime{$'$} \def\cprime{$'$}
\providecommand{\bysame}{\leavevmode\hbox to3em{\hrulefill}\thinspace}
\providecommand{\MR}{\relax\ifhmode\unskip\space\fi MR }
\providecommand{\MRhref}[2]{%
  \href{http://www.ams.org/mathscinet-getitem?mr=#1}{#2}
}
\providecommand{\href}[2]{#2}


\end{document}